\documentclass[11pt]{amsart}
\usepackage[utf8]{inputenc}
\usepackage[T1]{fontenc}
\usepackage{lmodern}
\usepackage{amsmath,amssymb,amsthm}
\usepackage[shortlabels]{enumitem}
\usepackage{xcolor}
\usepackage[colorlinks,linkcolor={red!50!black},citecolor={green!40!black}]{hyperref}
\usepackage[capitalise,nameinlink]{cleveref}
\usepackage[margin=30mm]{geometry}
\usepackage{tikz}
\usetikzlibrary{calc,shapes.geometric,bbox}
\newtheorem{theorem}{Theorem}[section]
\newtheorem{lemma}[theorem]{Lemma}

\theoremstyle{definition}
\newtheorem{definition}[theorem]{Definition}

\newtheorem{problem}[theorem]{Problem}
\crefname{problem}{Problem}{Problems}

\newcommand{\eps}{\varepsilon}
\newcommand{\N}{\mathbb{N}}

\newcommand{\upcl}[1]{\lfloor #1\rfloor}
\newcommand{\downcl}[1]{\lceil #1\rceil}
\newcommand{\sdowncl}[1]{\mathring{\lceil #1\rceil}}
\newcommand{\Tc}{\mathbb{T}}
\newcommand{\Tb}{\mathbb{T}^\flat}

\newcommand{\BG}{\mathbf{G}}
\newcommand{\G}{\mathbf{G}^\flat}
\newcommand{\RG}{\mathbf{R}}
\newcommand{\conc}{{}^\frown}
\DeclareMathOperator{\im}{im}
\DeclareMathOperator{\last}{last}
\DeclareMathOperator{\dom}{dom}

\title[Thin ends and topological spanning trees]{Thin ends and topological spanning trees}
\author{Nathan Bowler}
\author{Max Pitz}
\begin{document}

\begin{abstract}
Halin conjectured in 1964 that every connected graph has an end-faithful spanning tree.
He proved this for countable graphs, but in the early 1990s Seymour and Thomas and, independently, Thomassen found uncountable counterexamples.
Both are infinitely connected and hence one-ended, and their unique end has uncountable degree (i.e.\ contains uncountably many pairwise disjoint rays).

Several authors have since asked how much of the conjecture survives.
Diestel asked in 1992 whether a graph has an end-faithful spanning tree as soon as all its one-ended connected subgraphs have one; by a theorem of Polat, this would give end-faithful spanning trees in all connected graphs whose ends have countable degree.
Diestel and K\"uhn introduced topological spanning trees, a topological variant of end-faithful spanning trees that exists in every one-ended connected graph, and asked in 2004 whether every connected graph has one.
And Carmesin asked in 2019 whether every graph has a spanning tree that is end-faithful at least for its thin ends, the ends of finite degree.

We answer all three questions in the negative.
We show that the uncountably chromatic graph from an earlier note of ours on a problem of Erd\H{o}s and Hajnal has only ends of countable degree, each dominated by only countably many vertices, yet it has neither an end-faithful nor a topological spanning tree.
A refinement of this graph moreover has no spanning tree that contains a ray from every end of degree~$1$.
\end{abstract}

\maketitle

\section{Introduction}\label{sec_intro}

A rooted spanning tree $S$ of a connected graph $G$ is \emph{end-faithful} if every end of $G$ contains exactly one rooted ray of~$S$, a ray of $S$ that starts at the root. 
Halin~\cite{Halin64} conjectured in 1964 that every connected graph has an end-faithful spanning tree.
The conjecture holds for countable graphs~\cite{Halin64}, for graphs without a subdivided~$K_{\aleph_0}$, which even have normal spanning trees~\cite{Halin78}, and for graphs without a subdivided~$T_{\aleph_1}$, the $\aleph_1$-regular tree~\cite{Polat97}; see~\cite{PitzUnified} and~\cite{BurgerKurkofka} for short proofs of the last two results.
In general, however, Halin's conjecture was refuted by Seymour and Thomas~\cite{SeymourThomas} and, independently, by Thomassen~\cite{Thomassen}.

Both counterexamples are infinitely connected and hence one-ended, and their unique end is large: it has uncountable degree, i.e.\ contains uncountably many disjoint rays, and it is dominated by uncountably many vertices.
Is this the only way in which Halin's conjecture can fail?
Over the following three decades, this question has been asked in several variants.

The most direct form of the question was asked by Diestel:

\begin{problem}[Diestel 1992~{\cite[Problem~4.1]{Diestel92}}]\label{ProbDiestel}
If $G$ is connected and every one-ended connected induced subgraph of $G$ has an end-faithful spanning tree, does $G$ have an end-faithful spanning tree?
\end{problem}

By a theorem of Polat~\cite[Theorem~2.11]{Polat90}, every one-ended connected graph whose end has countable degree has an end-faithful spanning tree.
So a positive answer to \cref{ProbDiestel} would give end-faithful spanning trees in all connected graphs whose ends have countable degree: the rays of a one-ended connected subgraph $H$ of such a graph $G$ all lie in a single end of~$G$, so the end of $H$ has countable degree too.

Diestel and K\"uhn~\cite{DiestelKuhn} proposed a topological version of end-faithful spanning trees.
For a graph $G$, let $|G|$ be the space obtained from $G$, viewed as a 1-complex, by adding its ends as points, with the standard topology \textsc{Top} defined by Diestel and K\"uhn~\cite{DiestelKuhn}; we recall the details in \cref{sec_tst}.
A \emph{topological spanning tree} of $G$ is a path-connected subspace of $|G|$ that contains every vertex and every end of~$G$, contains every edge of which it contains an inner point, and contains no circle~\cite{DiestelKuhn}.
The closure of any normal spanning tree of $G$ is a topological spanning tree, and if a topological spanning tree meets $G$ in a connected subgraph, then this subgraph is an end-faithful spanning tree~\cite[Propositions~7.1 and~7.5]{DiestelKuhn}.
But a topological spanning tree may also be connected through ends, and this extra freedom might be just what is needed to escape the one-ended counterexamples.
Indeed, every one-ended connected graph has a topological spanning tree~\cite[Proposition~7.8]{DiestelKuhn}. 

\begin{problem}[Diestel and K\"uhn 2004~{\cite[Problem~7.7]{DiestelKuhn}}]\label{ProbDK}
Are there connected graphs $G$ such that $|G|$ has no topological spanning tree?
\end{problem}

Carmesin~\cite{Carmesin} took a different route: instead of restricting the graphs, he restricted the ends that the tree has to represent.
A rooted spanning tree $S$ of $G$ is said to be \emph{end-faithful for} a set $\Psi$ of ends of $G$ if every end in $\Psi$ contains exactly one rooted ray of~$S$.
Carmesin proved that every connected graph has a spanning tree that is end-faithful for its undominated ends~\cite[Corollary~2.6]{Carmesin}, and, by induction on $k\in\N$, one that is end-faithful for its ends dominated by at most $k$ vertices~\cite[Theorem~7.1 and Remark~6.14]{Carmesin}.
An end is \emph{thin} if is has finite degree, i.e.\ if it does not contain infinitely many disjoint rays.

\begin{problem}[Carmesin 2019~{\cite[Question~7.2]{Carmesin}}]\label{QCarmesin}
Does every graph have an end-faithful spanning tree for its thin ends?
\end{problem}

Carmesin added that the strengthening of \cref{QCarmesin} to the ends dominated by finitely or by countably many vertices is also open~\cite[p.~52]{Carmesin}.

\subsection*{Our results}
We answer all three problems: the spanning trees in question need not exist.
Our counterexample to \cref{ProbDiestel,ProbDK} is a graph that we had already constructed in 2024, for a problem about uncountable chromatic number~\cite{BowlerPitzChromatic}.
Its definition takes only a few lines:
Consider the injective sequences $t\colon\alpha\to\N$, for countable ordinals~$\alpha$, that are co-infinite, i.e.\ miss infinitely many natural numbers, ordered by $s\le t$ if $s$ is an initial segment of~$t$.
The \emph{immediate predecessor} $s^\star$ of a sequence $s$ of successor length is the initial segment one shorter than~$s$.
An initial segment $s$ of $t$ of successor length is a \emph{record} of~$t$ if its last value is smaller than all values that follow it in~$t$.

\begin{definition}[\cite{BowlerPitzChromatic}]
The graph $\BG$ has these sequences as its vertices, and joins every sequence $t$ to the immediate predecessor $s^\star$ of every record $s$ of~$t$.
\end{definition}

So every sequence of successor length is joined to its immediate predecessor, as it is a record of itself.
A non-empty sequence of limit length has an $\omega$-sequence of records, cofinal in it and with increasing values, and it is joined to their predecessors; see \cref{fig_records}.

\begin{figure}[h!]
\centering
\begin{tikzpicture}[
    val/.style={circle,draw=blue!55!black,fill=white,thin,inner sep=0pt,minimum size=13pt,font=\scriptsize},
    rec/.style={val,draw=red!80!black,thick},
    pre/.style={val,draw=orange!90!black,thick},
    lim/.style={circle,draw=black,fill=white,thin,inner sep=0pt,minimum size=9pt},
    recE/.style={gray!70,semithick},
    gamE/.style={orange!90!black,semithick},
  ]
\def\dx{0.72}
% the branch
\draw (0,0) -- (14*\dx+0.25,0);
\draw[densely dotted] (14*\dx+0.3,0) -- (14*\dx+1.0,0);
\draw (14*\dx+1.05,0) -- (14*\dx+1.55,0);
\node[circle,fill=black,inner sep=0pt,minimum size=2.6pt] at (0,0) {};
% values of t; records (red) and their predecessors (orange)
\foreach \i/\v/\st in {1/7/val,2/12/val,3/3/val,4/10/pre,5/2/rec,6/9/val,7/13/pre,8/4/rec,9/15/val,10/11/pre,11/5/rec,12/14/val,13/16/pre,14/6/rec}
  \node[\st] (n\i) at (\i*\dx,0) {$\v$};
\node[lim,label=right:{$t$}] (t) at (14*\dx+1.55,0) {};
% record graph: t joined to its records (above)
%\foreach \i [evaluate=\i as \h using 0.35+0.14*(14-\i)] in {5,8,11,14}
%  \draw[recE] (t.north) .. controls ($(t.north)+(-0.15,\h)$) and ($(n\i.north)+(0.15,\h)$) .. (n\i.north);
% Gamma: t joined to the predecessors of its records (below)
\foreach \i [evaluate=\i as \h using -0.35-0.14*(14-\i)] in {4,7,10,13}
  \draw[gamE] (t.south) .. controls ($(t.south)+(-0.15,\h)$) and ($(n\i.south)+(0.15,\h)$) .. (n\i.south);
%\node[gray!70!black,anchor=east] at (-0.2,0.75) {\small $\RG$};
\node[orange!80!black,anchor=east] at (-0.2,-0.75) {\small $\BG$};
\end{tikzpicture}
\caption{The records of a limit $t$ (red) and the edges of~$\BG$ at~$t$; nodes are labelled with their last values, and all later values exceed~$6$.}
\label{fig_records}
\end{figure}

\begin{theorem}\label{GammaThm}
The graph $\BG$ is connected, has $2^{\aleph_0}$ vertices, and:
\begin{enumerate}[(i)]
\item Every end of $\BG$ has countable \emph{combined degree}: it has countable degree and is dominated by only countably many vertices.
\item No spanning tree of $\BG$ contains a ray of every end of~$\BG$.
\item Every path-connected subspace of $|\BG|$ that contains all vertices and ends of $\BG$ contains a circle.
\end{enumerate}
\end{theorem}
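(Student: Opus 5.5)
We start with the easy facts and then treat (i)--(iii) in turn. For connectedness, every sequence $t$ of successor length is adjacent to $t^\star$. If $t$ has limit length, it is adjacent to $s^\star$ for some record $s<t$. So from any vertex there is a path that strictly descends in the tree order until it reaches the empty sequence; such a path is finite by well-foundedness. For the vertex count: $\N^{<\omega_1}$ restricted to countable ordinals has at most $\aleph_1\cdot 2^{\aleph_0}=2^{\aleph_0}$ elements. Conversely, the co-infinite injective $\omega$-sequences already number $2^{\aleph_0}$.

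The main structural tool is a separation lemma. For a vertex $s$ of successor length, let $U_s=\{t: t\ge s\}$. A neighbour of $t\in U_s$ outside $U_s$ is $r^\star$ for a record $r\le t$ with $r^\star\not\ge s$, i.e.\ $r\le s$. So the neighbourhood of $U_s$ lies in the countable set of predecessors of~$s$. Moreover, if $\last(s)$ is smaller than the values following it, the only such $r^\star$ are predecessors of records of $s$ itself.

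The approach is therefore to work with the tree order. Every ray $R$ of $\BG$ should determine a branch: I expect (as in \cite{BowlerPitzChromatic}) that every ray has a subsequence that is increasing in the tree order, with supremum $\sigma$ some countable-length sequence, possibly not co-infinite. Two rays with the same $\sigma$ should be equivalent, and rays with different $\sigma$ should be separated by a finite set, using the sets $U_s$ together with the record structure.

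For (i), fix an end $\omega$ with branch $\sigma$. Every ray in $\omega$ eventually lives in $\bigcup U_s$ for $s<\sigma$ near $\sigma$, and these sets are separated from the rest by countably many vertices (the predecessors of~$\sigma$). So any dominating vertex lies below $\sigma$, giving countably many. For the degree, disjoint rays in $\omega$ must pass through the countable down-closure of $\sigma$ infinitely often (again by the separators $U_s$ with $s$ being records), and a counting argument bounds the number of disjoint ones by $\aleph_0$.

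For (ii), suppose a spanning tree $S$ contains a ray of every end. I would run a pressing-down / Fodor-type argument along $\omega_1$, in the style of the chromatic-number proof. For each limit-length $t$, pick a record $r<t$ such that the $S$-edge structure near $t$ is determined below $r$. Then I would construct a co-infinite branch $\sigma$ of length $\omega$ or a limit ordinal whose end would need a ray of $S$ converging along~$\sigma$, contradicting acyclicity of $S$ (two $S$-paths between comparable vertices). The main obstacle is exactly this combinatorial step, choosing $\sigma$ by a diagonal construction that exploits the freedom to avoid infinitely many values.

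For (iii), apply the same argument to a path-connected subspace $X$: the arcs in $X$ through end points behave like tree paths via the separators $U_s$, using the standard result that arcs in $|G|$ meet finite separators consistently. This reduces (iii) to the combinatorial core of (ii).
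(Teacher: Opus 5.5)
There is a genuine gap; in fact there are two.

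\textbf{Finite adhesion.} Your separation lemma stops at `countable', and countable separators are not enough for anything that follows. What is true, and what everything hinges on, is finiteness. If $r\le s\le t$ and $r$ is a record of $t$, then $\last(r)$ is below all later values of $t$, in particular those of $s$. So $r$ is a record of $s$ itself, and no condition on $\last(s)$ is needed. Hence $N(U_s)$ lies in the set of predecessors of records of $s$, which has at most $\last(s)$ elements: $\BG$ is a $T$-graph of finite adhesion.

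Ends, domination and the topology of $|\BG|$ are all defined through finite separators, so a countable separator tells you nothing here. Infinitely many $v$--$R$ paths may run through distinct separator vertices, so `separated by countably many vertices' does not show that $v$ fails to dominate. Nor does it show that equivalent rays determine the same branch, which your degree count needs. With finite adhesion these steps go through. Two rays are equivalent iff they have the same high-ray. Every ray of the end meets that countable high-ray cofinally. And a dominating vertex lies in the high-ray or is its unique top.

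\textbf{The combinatorial core of (ii).} More seriously, (ii) is not proved. You name the combinatorial step as the main obstacle but do not carry it out, and the proposed mechanism does not fit. For a spanning tree that is not regressive, the path from a limit $t$ to the root may wander far above $t$. So `pick a record $r<t$ such that the $S$-edge structure near $t$ is determined below $r$' has no clear meaning. Also, what works is a pair of diagonal constructions of countable branches, not a pressing-down argument along $\omega_1$. The missing ideas are these.
\begin{enumerate}[(a)]
\item The \emph{exit} $h(v)$, the first vertex of $vSr$ outside $\upcl v$. It lies strictly below $v$ and plays the role of a regressive choice.
\item A first diagonal argument with $1$-extensions, which keep a small value in reserve. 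It shows there is $t_0$ such that every extension of $t_0$ has a limit $1$-extension $t''$ with $h(t'')\le t_0$.
\item A second diagonal that strings such limits $t''_n$ together via boundaries $t'_{n+1}=t''_n\conc a_{n+1}$, which append the reserved value. This kills all records in earlier stages and gives the bottleneck $N(\upcl{t'_{n+1}})\subseteq\{t''_0,\dots,t''_n\}\cup A^\star_{t_0}$, where $A^\star_{t_0}$ is finite and below $t_0$.
\item The conclusion. A rooted ray of $S$ in the resulting end has a tail in $\upcl{t'_1}$ that passes through infinitely many $t''_m$. So it meets some $t''_k$ before some $t''_{k'}$ with $k<k'$. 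This forces $h(t''_{k'})$ onto that tail, inside $\upcl{t'_1}$, contrary to $h(t''_{k'})\le t_0$.
\end{enumerate}

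\textbf{Part (iii).} The same scheme works, with $h(v)$ defined from the unique arc $vZr$. But you also need the jumping-arc lemma for these finite separators to get $h(v)<v$. You further need an extra argument that the final segment of an arc to the end cannot stay inside $\upcl{\hat u}$ for a possible top $\hat u=\bigcup_n t'_n$ of the high-ray. This is needed because arcs can pass through ends instead of meeting the high-ray cofinally; the fix uses $N(\upcl s)\subseteq\downcl{\hat u}$ for the children $s$ of $\hat u$.
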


Hence, the graph $\BG$ answers \cref{ProbDiestel}, as pointed out above, and \cref{ProbDK}, and countable combined degree does not imply the existence of an end-faithful spanning tree.

However, the graph $\BG$ does not answer Carmesin's \cref{QCarmesin}: every end of $\BG$ has degree $\aleph_0$ and is dominated by infinitely many vertices, so $\BG$ has no thin ends at all.
Our second result refines the construction of $\BG$ so that every spanning tree misses an end of degree~$1$ that is finitely dominated.
The definition is only slightly longer: 
Now every entry of a sequence carries a bit: consider the sequences $t\colon\alpha\to\N\times\{0,1\}$, for countable ordinals~$\alpha$, that are injective and co-infinite in their first coordinates, the \emph{values}; the second coordinates are the \emph{bits}.
Records are defined via the values as before, and a record is a \emph{$0$-record} if its last bit is~$0$.
Let $\Tb$ be the set of those sequences all of whose initial segments of limit length, other than the empty one, have infinitely many $0$-records.

\begin{definition}
The graph $\G$ has vertex set~$\Tb$; it joins every sequence of successor length to its immediate predecessor, and every sequence~$t$ to the immediate predecessor $s^\star$ of every $0$-record $s$ of~$t$.
\end{definition}

So $\G$ is defined like~$\BG$, except that only the records with bit~$0$ count.

\begin{theorem}\label{MainThm}
The graph $\G$ is connected, and:
\begin{enumerate}[(i)]
\item Every end of $\G$ has countable combined degree.
\item Every spanning tree of $\G$ misses an end of~$\G$ of degree~$1$ that is dominated by at least one but only finitely many vertices.
\item Every path-connected subspace of $|\G|$ that contains all vertices and ends of $\G$ contains a circle.
\end{enumerate}
\end{theorem}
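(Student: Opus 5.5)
We plan to follow the same pattern that we expect for \cref{GammaThm}, treating $\G$ as a refinement of $\BG$. The approach has four steps.

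\emph{Structural facts.} First we record the basic structure of $\G$. The tree order $\le$ on $\Tb$ is a normal-like order for $\G$: every edge joins two comparable sequences. Moreover, for every $t$, the down-closure $\downcl{t}$ separates the part of $\G$ above $t$ from the rest. Concretely, a neighbour of a sequence $u>t$ that lies outside the up-closure of $t$ must be the predecessor of a $0$-record of $u$, and hence lies below $t$. Connectedness then follows by transfinite induction on length. A successor is joined to its predecessor. A nonempty limit $t$ is joined to the predecessors $s^\star$ of its infinitely many $0$-records, and this is exactly why $\Tb$ requires infinitely many $0$-records at limits.

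\emph{Ends and part (i).} Next we describe the ends. For a sequence $t$ of limit length with only finitely many $0$-records (so $t\notin\Tb$), or for a branch that cannot be extended within $\Tb$, the chain of proper initial segments of $t$ contains a ray. This ray determines an end $\omega_t$. We will show that every end arises in this way from a chain of countable cofinality, hence from an $\omega$-sequence.

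\emph{Part (i).} Countable degree and countable domination should follow because all rays of $\omega_t$ eventually live above the initial segments of $t$. The dominating vertices of $\omega_t$ are then among the countably many predecessors $s^\star$ of records of $t$, together with the sequences of $\downcl t$. For the special ends whose branch $t$ has only finitely many $0$-records but infinitely many $1$-records, we will check the following. The only vertices adjacent to infinitely many of the $t{\restriction}n$ are predecessors of $0$-records. There are finitely many of these, and at least one (the empty sequence, or the last $0$-record predecessor) dominates. Also the up-closures separate, so the end has degree $1$.

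\emph{Part (ii), the main obstacle.} Given a spanning tree $S$, we build by transfinite recursion a branch $t$ that is increasing, of countable length, and whose end is thin and missed by~$S$. The plan is a Fodor / pressing-down argument in the style of the chromatic number proof from~\cite{BowlerPitzChromatic}. At each limit stage we have a sequence $u\in\Tb$ together with the $S$-edge from $u$ down to some $s^\star$, and we use the freedom to choose new values and bits to steer it. The aim is to arrange a branch $t$ of length $\omega$ past some stage, with only finitely many $0$-records, such that $S$ cannot contain a ray of $\omega_t$. A ray of $S$ in $\omega_t$ would have to climb through the $t{\restriction}n$, and the $S$-path from the root to them would have to pass through separators $\downcl{t{\restriction}n}$. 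We choose $1$-bits so that those separators become finite, and we choose the values (using co-infiniteness to keep spare values) so that $S$ is forced to leave the branch. The delicate point is the density/stationarity argument showing that some choice fails for every $S$. We would try to embed the $\BG$ argument of \cref{GammaThm}(ii) by treating $0$-records as the records of $\BG$ and using $1$-bits to make the witnessing end thin.

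\emph{Part (iii).} We derive this analogously to \cref{GammaThm}(iii). A path-connected subspace containing all vertices and ends but no circle yields, by the same recursion, a branch along which the arc from a fixed vertex to the thin end $\omega_t$ must use edges of the branch. Since $\omega_t$ has degree~$1$ and is finitely dominated, passing through ends gives no extra freedom there, and we reach the same contradiction as in (ii).
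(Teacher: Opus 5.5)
Your treatment of connectedness and (i) is fine in outline, but part (ii) has a genuine gap, and (iii) inherits it. What you call ``the delicate point'' is the entire proof, and the two concrete ingredients you do name would not work.

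\textbf{The gap in (ii).} First, ``the $S$-edge from $u$ down to some $s^\star$'' only makes sense for regressive trees. In an arbitrary spanning tree, $u$ may have several lower neighbours in $S$, and the path $uSr$ may first wander up into $\upcl u$. Second, a branch that is of length $\omega$ past some stage, as you propose with your $t\restriction n$, cannot work, whatever values and bits you choose. Take the spanning tree made of all edges $q^\star q$ plus one down-edge at every limit. For every high-ray whose elements above its last limit $\mu$ are successors $s_1<s_2<\cdots$, this tree contains the ray $\mu s_1s_2\cdots$, which lies in the corresponding end by \cref{Ends}(iii). So the missed end must lie on a high-ray through infinitely many limits, and you need control over how $S$ leaves those limits.

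\textbf{The missing idea.} The paper gets this control from the exit $h(v)$: the first vertex of $vSr$ outside $\upcl v$. By (K3) it lies in $\sdowncl v$, for every spanning tree. The argument then has two diagonals.
\begin{enumerate}[(a)]
\item A pressing-down type diagonal over extensions and $1$-extensions yields $t_0$ such that every extension of $t_0$ has a $1$-extension $t''$ that is a limit of $\Tb$ with $h(t'')\le t_0$.
\item Starting from $t'_0=t_0\conc(a_0,0)$, one alternates such limits $t''_n$ (skipping $a_{n+1}$) with boundaries $t'_{n+1}=t''_n\conc(a_{n+1},1)$. The small value kills every record of the preceding stage, and bit $1$ makes the boundary itself invisible. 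Hence $N(\upcl{t'_{n+1}})\subseteq\{t''_n\}\cup M^\star$ with $M^\star\subseteq\downcl{t_0}$ finite. Also $\bigcup_n t'_n$ is closed, so $\hat\varrho$ has no top.
\end{enumerate}
Your remark that $1$-bits make the separators finite misses this role: separators are always finite by finite adhesion. The bits make each separator a fixed finite set plus one limit with low exit.

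A rooted ray of $S$ in this end must pass through infinitely many $t''_m$, hence through some $t''_k$ before some $t''_{k'}$ with $k<k'$. Then the exit $h(t''_{k'})\le t_0$ would lie on the ray after $t''_k$, inside $\upcl{t'_1}$, a contradiction. The same bottlenecks give degree $1$, and $t_0$ dominates because $t'_0$ is a $0$-record of every $t''_n$.

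Domination is not automatic, contrary to your claim in (i): a branch of length $\omega$ with all bits $1$ gives an undominated end.

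\textbf{Part (iii).} ``Passing through ends gives no extra freedom'' is not an argument, since an arc can run through many other ends. The paper instead defines $h(v)$ via the unique arc $vZr$ and proves $h(v)<v$ with the jumping-arc lemma (\cref{ArcExit}). It then reruns the argument for an arc from $r$ to $\eps$, using \cref{LastEntry} and the absence of a top of $\hat\varrho$.

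\textbf{A smaller point in (i).} Countable degree needs that every ray of the end meets the countable chain $\varrho$ (\cref{Ends}(iii)). It is not enough that the rays eventually lie above its initial segments.
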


As a consequence, the graph $\G$ answers all three problems simultaneously, as well as Carmesin's strengthening of \cref{QCarmesin} to the ends dominated by finitely many vertices.

As every tree in a connected graph extends to a spanning tree, statement~(ii) of \cref{GammaThm,MainThm} holds for all trees, spanning or not.

\subsection*{History and acknowledgements}
The graph $\BG$ has a curious history.
We first studied the end-faithful and the topological spanning tree problems in the summer of 2023, in unpublished notes, on the \emph{record graph}~$\RG$.
Its vertices are all injective sequences $t\colon\alpha\to\N$, for countable ordinals~$\alpha$; it joins every sequence of successor length to its immediate predecessor, and every sequence of limit length to its records themselves, rather than to their immediate predecessors as in~$\BG$.
A diagonal argument showed that no \emph{regressive} spanning tree of~$\RG$, a tree built from one down-neighbour for every vertex other than the root, contains a ray of every end; but $\RG$ turned out to have end-faithful spanning trees of a different kind, and we set the problem aside.
Later, early in 2024, wondering whether graphs of this kind could also be used for a problem of Erd\H{o}s and Hajnal on uncountably chromatic graphs, we had the idea of joining every sequence to the predecessors of its records instead.
This raises the chromatic number from countable to uncountable~\cite[Remark~(2)]{BowlerPitzChromatic}, and it gave~---\ word for word~---\ the graph~$\BG$~\cite{BowlerPitzChromatic}.
We never returned to end-faithful spanning trees for~$\BG$, rashly assuming that so small a change could make no difference there.

As \cref{GammaThm} shows, this was a mistake.
It was Claude (Anthropic) that observed that our published graph $\BG$ has no end-faithful and no topological spanning tree, following closely the arguments of our 2023 notes for regressive spanning trees of~$\RG$, adapted now to $\BG$.
A first version of~$\G$, built from our input~$\RG$, was found by Claude; the current version, based on~$\BG$, was then built by us.
A first version of this paper was assembled by Claude from our 2023 notes, from~\cite{BowlerPitzChromatic} and from~\cite{KurkofkaPitz}; we then rewrote it, in part with Claude's help, shortening the new arguments and isolating more clearly the crucial steps.

\section{Ends of \texorpdfstring{$T$}{T}-graphs}\label{sec_prelims}

We use the terminology of~\cite{KurkofkaPitz}, and we write $\N=\{1,2,3,\dots\}$.
A \emph{ray} is a one-way infinite path, and its subrays are its \emph{tails}.
Two rays of a graph $G$ are \emph{equivalent} if no finite set of vertices separates them; the equivalence classes are the \emph{ends} of~$G$.
For a finite set $X\subseteq V(G)$ and an end~$\eps$, all rays of $\eps$ have tails in the same component $C(X,\eps)$ of $G-X$.
The \emph{degree} of an end is the supremum of the cardinalities of sets of disjoint rays in it, and an end is \emph{thin} if it contains no infinite set of disjoint rays.
A vertex $v$ \emph{dominates} a ray $R$ if there are infinitely many $v$--$(R-v)$ paths that are disjoint except at~$v$; this depends only on the end of~$R$.
An end is \emph{undominated} if no vertex dominates it.
The \emph{combined degree} of an end $\eps$ is the cardinal $\deg(\eps)+\dom(\eps)$, where $\dom(\eps)$ is the number of vertices dominating~$\eps$~\cite{PitzBook}.
Let $S\subseteq G$ be a tree with a root~$r$. A \emph{rooted ray} of $S$ is a ray in $S$ that starts at~$r$, and $S$ is \emph{end-faithful} if every end of $G$ contains exactly one rooted ray of~$S$.
Every ray of $S$ has a tail that is a tail of a rooted ray, namely of the path in $S$ from $r$ to the ray followed by a tail of it; so $S$ contains a ray of an end if and only if it contains a rooted ray of that end.
We say that $S$ \emph{misses} an end of $G$ if it contains no ray of that end.

An \emph{order tree} is a partially ordered set $(T,\le)$ with a least element, its \emph{root}, in which every down-closure $\downcl{t}:=\{t'\in T:t'\le t\}$ is well-ordered.
We write $\sdowncl{t}:=\downcl{t}\setminus\{t\}$ for the chain below~$t$, and $\upcl{t}:=\{t'\in T:t\le t'\}$ for the \emph{up-closure} of~$t$; for $s\le t$ we write $[s,t]$, $(s,t)$, \dots\ for intervals.
A node $t$ is a \emph{successor} if $\sdowncl{t}$ has a maximum, its \emph{immediate predecessor}~$t^\star$, and a \emph{limit} otherwise; in particular the root is a limit.
A \emph{high-ray} of $T$ is a non-empty down-closed chain without a maximum.
A \emph{top} of a high-ray $\varrho$ is a node $t\notin\varrho$ with $\sdowncl{t}=\varrho$; it is necessarily a limit.
The order tree is \emph{Hausdorff} if every high-ray has at most one top~\cite{PitzBook}.

A \emph{$T$-graph}~\cite{BrochetDiestel} is a graph $G$ with vertex set $T$ such that the ends of every edge are comparable in $T$ and every node $t$ has a set of neighbours below it that is cofinal in~$\sdowncl{t}$; in particular, every successor is adjacent to its predecessor, and $G$ is connected.
For a set $X$ of vertices of~$G$, or a subgraph, we write $N(X)$ for the set of vertices outside $X$ with a neighbour in~$X$.
A $T$-graph has \emph{finite adhesion} if $N(\upcl{s})$ is finite for every successor~$s$ of $T$~\cite[Lemma~4.2]{KurkofkaPitz}.
We use the following facts about a $T$-graph~$G$; \ref{K1} and~\ref{K2} are due to Brochet and Diestel, and \ref{K4} is due to Kurkofka and the second author.
\begin{enumerate}[(K1)]
\item\label{K1} Every interval $[s,t]$ of $T$ is connected in~$G$~\cite[Section~2]{BrochetDiestel}; see also~\cite[Lemma~2.3(iv)]{KurkofkaPitz}. Hence so is every up-closure $\upcl s$, the union of the intervals $[s,t]$ with $t\ge s$.
\item\label{K2} Every connected subgraph of $G$ has a unique $T$-minimal vertex~\cite[Section~2]{BrochetDiestel}; see also~\cite[Lemma~2.3(ii)]{KurkofkaPitz}. All its vertices lie above this vertex. 
\item\label{K3} $N(\upcl t)\subseteq\sdowncl t$ for every $t\in T$: a neighbour $x\notin\upcl t$ of a vertex $u\in\upcl t$ is comparable with~$u$, so $x<u$; hence $x$ and $t$ lie in the chain $\downcl u$, and $x<t$ as $x\notin\upcl t$.
\end{enumerate}
If $G$ has finite adhesion, then moreover:
\begin{enumerate}[(K1)]\setcounter{enumi}{3}
\item\label{K4} All chains of $T$ are countable~\cite[Corollary~4.7]{KurkofkaPitz}, and every end of $G$ has countable degree~\cite[Corollary~5.2]{KurkofkaPitz}.
\end{enumerate}

The ends of a $T$-graph of finite adhesion correspond to the high-rays of~$T$, as Kurkofka and the second author showed~\cite[Lemma~5.1]{KurkofkaPitz}; see also~\cite[Lemmas~6.4 and~6.5]{KurkofkaPitz}.
For the convenience of the reader, we give a short self-contained account of the facts we need.
Let $R$ be a ray of a $T$-graph~$G$.
Let $t_0$ be the $T$-minimal vertex of~$R$, see~\ref{K2}, and for $n\ge0$ let $t_{n+1}$ be the $T$-minimal vertex of the tail of $R$ after~$t_n$.
Then $t_0<t_1<\cdots$ are vertices of~$R$, and $\varrho(R):=\bigcup_n\downcl{t_n}$ is a high-ray of~$T$ which $R$ meets cofinally.
For every $t\in T$,
\begin{equation}\label{TailCone}
\text{$R$ has a tail in $\upcl t$ if and only if $t\in\varrho(R)$.}
\end{equation}
Indeed, if $t\le t_{n+1}$, then the tail of $R$ after $t_n$ lies in~$\upcl t$; conversely, a tail in $\upcl t$ contains some~$t_n$, and then $t\le t_n$.

\begin{lemma}\label{Ends}
Let $G$ be a $T$-graph of finite adhesion.
\begin{enumerate}[(i)]
\item If $s$ is a successor and $R$ a ray with $s\notin\varrho(R)$, then $R$ has a tail outside $\upcl s\cup N(\upcl s)$.
\item Two rays $R,R'$ of $G$ are equivalent if and only if $\varrho(R)=\varrho(R')$.
\item A ray $R$ of $G$ satisfies $\varrho(R)=\varrho$ for a high-ray $\varrho$ if and only if $V(R)\cap\varrho$ is cofinal in~$\varrho$; and every high-ray of $T$ is of the form $\varrho(R)$ for some ray~$R$.
\end{enumerate}
\end{lemma}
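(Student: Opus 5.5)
I will prove the three parts in order, using (K1)--(K3), finite adhesion, and the characterisation~\eqref{TailCone} of tails in up-closures.

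\emph{Part (i).} Let $s$ be a successor with $s\notin\varrho(R)$. By~\eqref{TailCone}, $R$ has no tail in $\upcl s$. The set $X:=N(\upcl s)$ is finite by finite adhesion, so some tail $R'$ of $R$ avoids~$X$. If $R'$ met $\upcl s$, then, since $R'$ cannot cross from $\upcl s$ to its complement without passing through~$X$, the whole of $R'$ would lie in $\upcl s$. That contradicts $s\notin\varrho(R)$. Hence $R'$ lies outside $\upcl s\cup X$.

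\emph{Part (ii).} First suppose $\varrho(R)\neq\varrho(R')$. Both are down-closed chains, so there is a node, say $t\in\varrho(R)\setminus\varrho(R')$, and we may take $t$ minimal with this property. Because $\varrho(R)$ has no maximum, we may replace $t$ by its successor in $\varrho(R)$ and assume $t$ is a successor with $t\notin\varrho(R')$; every node of $\varrho(R)$ above the first difference also lies outside $\varrho(R')$, since $\varrho(R')$ is down-closed. Now $R$ has a tail in $\upcl t$ by~\eqref{TailCone}, and by~(i) $R'$ has a tail outside $\upcl t\cup N(\upcl t)$. So the finite set $N(\upcl t)$ separates $R$ from $R'$, and they are not equivalent.

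Conversely, suppose $\varrho(R)=\varrho(R')=\varrho$ and let $X$ be finite. Pick $t_n$ in $\varrho$ above all elements of $X\cap\varrho$, with $t_n\in R$. The plan is to find a successor $s\in\varrho$ above $X\cap\varrho$ such that $\upcl s$ avoids $X$ entirely. The vertices of $X$ outside $\varrho$ are a concern, but any $x\in X$ lies in $\upcl s$ only if $s\le x$, and then $x\notin\varrho$ forces $x$ to branch off above~$s$. Since $X$ is finite and each such $x$ meets $\varrho$ in an initial segment $\downcl x\cap\varrho$ bounded in $\varrho$, we can choose $s$ above all these bounds. Then $\upcl s\cap X=\emptyset$, and both rays have tails in $\upcl s$ by~\eqref{TailCone}. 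By~(K1), $\upcl s$ is connected, so it contains a path between the two tails avoiding~$X$. Hence $R$ and $R'$ are equivalent.

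\emph{Part (iii).} If $\varrho(R)=\varrho$, then $R$ contains all $t_n$, which are cofinal in~$\varrho$. Conversely, suppose $V(R)\cap\varrho$ is cofinal in $\varrho$ and let $t\in\varrho$. The set $N(\upcl{s})$ is finite for a successor $s>t$ in~$\varrho$, and $R$ meets $\upcl s$ infinitely often, since the vertices of $R$ in $\varrho$ above $s$ are infinitely many. If $s\notin\varrho(R)$, then by~(i) $R$ has a tail outside $\upcl s$, which is impossible; so $s\in\varrho(R)$ and, by down-closure, $t\in\varrho(R)$. For the reverse inclusion, if some $u\in\varrho(R)\setminus\varrho$, take a successor $u'\le u$ not in $\varrho$ (possible by choosing the minimal element of $\downcl u\setminus\varrho$ and passing to a successor in $\varrho(R)$ if necessary). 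Then $R$ has a tail in $\upcl{u'}$, which is disjoint from~$\varrho$, contradicting cofinality. Hence $\varrho(R)=\varrho$.

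For existence, (K4) makes $\varrho$ countable with no maximum. Choose a cofinal sequence $s_0<s_1<\cdots$ in~$\varrho$ and concatenate $s_n$--$s_{n+1}$ paths inside the intervals $[s_n,s_{n+1}]$, which are connected by~(K1). This gives a connected infinite subgraph, and the paths can be chosen so that it contains a ray: each interval is finite-or-not, but repeats only finitely many times since $\upcl{s_{n+1}}$ eventually avoids earlier intervals. This ray meets $\varrho$ cofinally, so $\varrho(R)=\varrho$ by the first half of~(iii).

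The main obstacle is the converse in~(ii), namely choosing a separator-free cone $\upcl s$ in the presence of vertices of $X$ not on~$\varrho$. This needs care with limit tops: a top of $\varrho$ lies above every element of~$\varrho$. Such finitely many tops, however, lie outside $\upcl s$ only if they are not above~$s$, and every top \emph{is} above~$s$. So I would instead delete $X$ from $\upcl s$ and argue that, because the tops are limits adjacent only to cofinal sets of nodes below them, the set $\upcl s\setminus X$ still has the two tails in one component. One way is to use paths within the intervals $[s,t_m]$ of~$\varrho$, which avoid tops entirely.
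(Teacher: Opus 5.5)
Your parts (i), the forward direction of (ii), and (iii) are correct and follow the paper's proof closely. The gap is in the converse of (ii). The argument you write there needs a successor $s\in\varrho$ with $\upcl{s}\cap X=\emptyset$, and such an $s$ need not exist. If $X$ contains a top $u$ of $\varrho$, or any node above one, then $\downcl{u}\cap\varrho=\varrho$ is not bounded in $\varrho$, and $u\in\upcl{s}$ for every $s\in\varrho$. The lemma quantifies over all finite $X$, and high-rays with tops are common (in $\BG$, every $\varrho$ with $\bigcup\varrho$ co-infinite has one), so this case must be handled. You notice this yourself in your last paragraph, but you only gesture at a fix there, so as written this direction is not proved.

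The fix you hint at in your final sentence is the right one, and it is exactly the paper's argument. Make it the proof and drop the cone $\upcl{s}$ altogether; the remark about tops being adjacent to cofinal sets is not needed.
\begin{itemize}
\item Pick $s\in\varrho$ above every element of $X\cap\varrho$, and take tails of $R$ and $R'$ that avoid the finite set $X$.
\item Each ray meets its own high-ray cofinally, since it contains the nodes $t_0<t_1<\cdots$ defining it. As $\varrho(R)=\varrho(R')=\varrho$, each of the two tails contains a vertex of $\varrho$ above $s$; call them $v$ and $v'$.
\item The interval from $s$ to the larger of $v,v'$ is connected by (K1) and lies in $\varrho$ by down-closure. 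Its nodes are all above every element of $X\cap\varrho$, so it avoids $X$ and links the two tails in $G-X$.
\end{itemize}
Nodes off $\varrho$, tops included, never enter this argument.

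One smaller point, in the existence part of (iii): the concatenated paths form a ray simply because $[s_n,s_{n+1}]$ and $[s_m,s_{m+1}]$ meet only in $s_{n+1}$ when $m=n+1$, and are disjoint when $m>n+1$. State this instead of your sentence about intervals ``repeating''.
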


\begin{proof}
(i) The up-closure $\upcl s$ is connected, so it spans a component of $G-N(\upcl s)$; hence a tail of $R$ avoiding the finite set $N(\upcl s)$ lies in $\upcl s$ or avoids it, and by~\eqref{TailCone} it avoids~$\upcl s$.

(ii) If $\varrho(R)\ne\varrho(R')$, we may assume that some $t\in\varrho(R)$ does not lie in~$\varrho(R')$; then the successor $s$ of $t$ in $\varrho(R)$ does not lie in $\varrho(R')$ either.
By~\eqref{TailCone} and~(i), $R$ has a tail in $\upcl s$ and $R'$ has a tail outside $\upcl s\cup N(\upcl s)$, so the finite set $N(\upcl s)$ separates them.
Conversely, let $\varrho:=\varrho(R)=\varrho(R')$ and let $X$ be a finite set of vertices.
Pick $t\in\varrho$ above all of $X\cap\varrho$; the nodes of $\varrho$ above~$t$ form a connected subgraph of $G-X$ by~\ref{K1}, and every tail of $R$ or of $R'$ meets it, as both rays meet $\varrho$ cofinally.
So $X$ does not separate $R$ from~$R'$.

(iii) If $\varrho(R)=\varrho$, then $R$ meets $\varrho$ cofinally.
Conversely, suppose that $V(R)\cap\varrho$ is cofinal in~$\varrho$.
For a successor $s\in\varrho$, infinitely many vertices of $R$ lie in~$\upcl s$, so $s\in\varrho(R)$ by~(i); as the successors are cofinal in~$\varrho$, we get $\varrho\subseteq\varrho(R)$.
If $\varrho(R)$ contained a node above all of~$\varrho$, it would contain a successor $s'$ above all of~$\varrho$, and by~\eqref{TailCone} a tail of $R$ would lie in $\upcl{s'}$, which is disjoint from~$\varrho$, while $R$ meets $\varrho$ infinitely often.
So $\varrho(R)=\varrho$.
Finally, a high-ray $\varrho$ has a cofinal sequence $u_0<u_1<\cdots$ by~\ref{K4}, and by~\ref{K1} there are $u_n$--$u_{n+1}$ paths in the intervals $[u_n,u_{n+1}]$; their union is a ray with all its vertices in~$\varrho$, which meets $\varrho$ cofinally.
\end{proof}

By \cref{Ends}(ii), all rays of an end $\eps$ of $G$ have the same high-ray $\varrho(\eps):=\varrho(R)$, and by \cref{Ends}(ii) and~(iii), every high-ray $\varrho$ of $T$ is $\varrho(\eps)$ for exactly one end $\eps=:\eps_\varrho$, the end \emph{belonging} to~$\varrho$.

\begin{lemma}\label{Dominators}
Let $G$ be a $T$-graph of finite adhesion, let $\varrho$ be a high-ray, and $v$ a vertex dominating~$\eps_\varrho$.
Then $v$ lies in $\varrho$ or is a top of~$\varrho$.
Moreover, if $v\in\varrho$, then $v\in N(\upcl{s})$ for every successor $s\in\varrho$ above~$v$.
\end{lemma}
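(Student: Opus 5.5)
Fix a ray $R$ with $\varrho(R)=\varrho$, and an infinite family $\mathcal P$ of $v$--$(R-v)$ paths that pairwise meet only in~$v$. The plan is to use finite separators of the form $N(\upcl s)$ from finite adhesion: whenever such a finite set separates $v$ from a tail of~$R$ without containing~$v$, only finitely many paths in $\mathcal P$ can pass through it, which contradicts domination. Every step reduces to this observation.

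\textbf{First claim.} We show that $v$ lies in $\varrho$ or is a top of~$\varrho$.

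Suppose first that $v$ is comparable with every node of~$\varrho$. If $v\notin\varrho$, then $v$ lies above all of~$\varrho$, since $\varrho$ is down-closed. Let $t$ be the least element of $\downcl v$ not in~$\varrho$. Then $\sdowncl t=\varrho$, because $\varrho$ is a chain in $\downcl v$ with no maximum. If $t=v$, then $v$ is a top of~$\varrho$, as required. Otherwise $t<v$, and we take the successor $s$ of $t$ in $\downcl v$. Then $v\in\upcl s$ and $s\notin\varrho$. By \cref{Ends}(i), $R$ has a tail outside $\upcl s\cup N(\upcl s)$. By~\ref{K1}, $\upcl s$ is connected, so it is contained in a single component of $G-N(\upcl s)$. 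Hence the finite set $N(\upcl s)$ separates $v$ from that tail. Since $v\notin N(\upcl s)$, this is impossible for a dominating vertex.

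Suppose instead that $v$ is incomparable with some $t\in\varrho$. Let $s$ be the successor of $t$ in~$\varrho$. Then $v\notin\upcl s$, and since $N(\upcl s)\subseteq\sdowncl s$ by~\ref{K3}, also $v\notin N(\upcl s)$: the elements of $\sdowncl s$ are at most~$t$ and so comparable with~$t$. By~\eqref{TailCone}, $R$ has a tail inside $\upcl s$. Again the finite set $N(\upcl s)$ separates $v$ from this tail, a contradiction. So this case cannot occur, and the first claim is proved.

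\textbf{Second claim.} Now let $v\in\varrho$, and let $s\in\varrho$ be a successor above~$v$. By~\eqref{TailCone}, $R$ has a tail $R'$ in $\upcl s$, while $v\notin\upcl s$. Infinitely many paths in $\mathcal P$ end on $R'$ away from~$v$. Each such path starts at $v\notin\upcl s$ and ends in~$\upcl s$, so it has a last vertex outside $\upcl s$, and that vertex lies in $N(\upcl s)$. The paths are disjoint apart from~$v$ and $N(\upcl s)$ is finite, so all but finitely many of these last vertices equal~$v$. In particular $v\in N(\upcl s)$.

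The main obstacle I expect is the case analysis in the first claim, specifically locating a successor $s$ with $v\notin N(\upcl s)$ so that the separator avoids~$v$. This is handled by \cref{Ends}(i) in the first case and by~\ref{K3} in the second.
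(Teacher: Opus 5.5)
Your proof is correct and takes essentially the same route as the paper. It uses the separators $N(\upcl s)$ for successors $s\in\varrho$ with $v\notin\upcl s$, and for $v$ above all of $\varrho$ it takes the least element of $\downcl v\setminus\varrho$ and applies \cref{Ends}(i).

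The paper is slightly more economical. Your second-claim argument, applied to any successor $s\in\varrho$ with $v\notin\upcl s$, already gives $v\in N(\upcl s)\subseteq\varrho$ by~\ref{K3}. That disposes of your incomparable case at once, so it need not be treated separately.
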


\begin{proof}
As $v$ dominates the rays of~$\eps_\varrho$, no finite set of vertices other than $v$ separates $v$ from a tail of such a ray.
Let $s\in\varrho$ be a successor with $v\notin\upcl{s}$.
By~\eqref{TailCone}, the rays of $\eps_\varrho$ have tails in~$\upcl{s}$, and the finite set $N(\upcl{s})$ separates these tails from $v$ unless $v\in N(\upcl{s})$; so $v\in N(\upcl{s})\subseteq\varrho$ by~\ref{K3}.
This proves the second assertion, and the first if such an $s$ exists.
Otherwise $v$ lies above all of $\varrho$ but not in~$\varrho$, so the least element $t$ of $\downcl{v}\setminus\varrho$ is a top of~$\varrho$.
If $t<v$, then the successor $s'$ of $t$ in $\downcl{v}$ does not lie in~$\varrho$; so by \cref{Ends}(i) the rays of $\eps_\varrho$ have tails outside $\upcl{s'}\cup N(\upcl{s'})$, and $N(\upcl{s'})$ separates these tails from $v\in\upcl{s'}$.
So $v=t$.
\end{proof}

So if $T$ is Hausdorff, then every end of a $T$-graph of finite adhesion has countable combined degree, by \cref{Dominators} and~\ref{K4}.

\section{The graph \texorpdfstring{$\BG$}{G}}\label{sec_Gamma}

We now repeat the definition of $\BG$ from the introduction formally.
Let $T^*$ be the set of all injective transfinite sequences $t\colon\alpha\to\N$ with $\alpha<\omega_1$, ordered by extension, i.e.\ $t\le t'$ if $t=t'\restriction\alpha$ for the domain $\alpha$ of~$t$.
This is an order tree; its root is the empty sequence.
For a sequence $s\colon\alpha+1\to\N$ of successor length, let $\last(s):=s(\alpha)\in\N$ be the last value of~$s$, and $s^\star:=s\restriction\alpha$ its immediate predecessor; these sequences are the successor nodes of~$T^*$, and $\Sigma(T^*)$ is their set.
We write $\im(t)$ for the set of values of~$t$, and for $n\in\N\setminus\im(t)$ the sequence $t\conc n$ is $t$ followed by~$n$.
A node $t$ is \emph{co-infinite} if $|\N\setminus\im(t)|=\infty$, and $\Tc\subseteq T^*$ is the down-closed subtree of all co-infinite nodes; we put $\Sigma(\Tc):=\Sigma(T^*)\cap\Tc$.
So $\Tc$ is Hausdorff: a high-ray $\varrho$ of $\Tc$ has at most one top in~$\Tc$, the sequence $\bigcup\varrho$, and it has one if and only if this sequence is co-infinite, because a sequence of limit length is the union of its proper initial segments.

\begin{definition}[Records]
For $t\in T^*$, let
\[
A_t:=\{s\le t:s\in\Sigma(T^*),\ \last(s)=\min(\im(t)\setminus\im(s^\star))\},
\]
and $A^\star_t:=\{s^\star:s\in A_t\}$, as in~\cite[Section~2]{BowlerPitzChromatic}.
We call the elements of $A_t$ the \emph{records} of~$t$: they are the nodes $s\le t$ in $\Sigma(T^*)$ whose last value is the minimum of the tail of $t$ beginning at~$s$, that is, $\last(s)<\last(y)$ for every $y\in\Sigma(T^*)$ with $s<y\le t$.
\end{definition}

Every $s\in\Sigma(T^*)$ is a record of itself, and the definition of $A_s$ implies that for all $s\le u\in T^*$ we have
\[
A_u\cap\sdowncl{s}\subseteq A_s\qquad\text{and}\qquad A^\star_u\cap\sdowncl{s}\subseteq A^\star_s.\tag{$\star$}
\]

\begin{lemma}\label{Records}
Let $t\in T^*$ be a node other than the root.
\begin{enumerate}[(i)]
\item If $s<s'$ are records of $t$, then $\last(s)<\last(s')$.
\item If $t\in\Sigma(T^*)$, then $|A^\star_t|=|A_t|\le\last(t)$.
\item If $t$ is a limit, then for every $z<t$, the node $y\in\Sigma(T^*)$ with $z<y<t$ where $\last(y)$ is minimal is a record of~$t$. So the records of a limit $t$ are cofinal in $\sdowncl{t}$; they form an $\omega$-sequence, and only finitely many of them lie below any given node $z<t$.
\end{enumerate}
\end{lemma}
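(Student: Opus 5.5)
All three parts follow directly from the description of records as the nodes $s\le t$ in $\Sigma(T^*)$ whose last value is smaller than $\last(y)$ for every successor $y$ with $s<y\le t$. Throughout we use that $t$ is injective, so distinct successors below $t$ have distinct last values.

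For (i), let $s<s'$ be records of $t$. Then $s'$ is a successor with $s<s'\le t$, and since $s$ is a record, $\last(s)<\last(s')$.

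For (ii), let $t$ be a successor. The map $s\mapsto s^\star$ is injective on successors, so $|A^\star_t|=|A_t|$. Every record $s$ of $t$ satisfies $s\le t$, and $t$ itself is a successor with $s<t$ unless $s=t$. Hence every record $s$ of $t$ has $\last(s)\le\last(t)$. By (i), the records of $t$ form a chain with strictly increasing last values, so $s\mapsto\last(s)$ is injective from $A_t$ into $\{1,\dots,\last(t)\}$. This gives $|A_t|\le\last(t)$.

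For (iii), let $t$ be a limit and $z<t$. Successors $y$ with $z<y<t$ exist, for instance the successor of $z$ in $\downcl{t}$, since $t$ is a limit. Their last values are positive integers, so a minimum is attained, and by injectivity it is attained at a unique node $y$. To see that $y$ is a record of $t$, take any successor $y'$ with $y<y'\le t$. Since $t$ is a limit, $y'\ne t$, so $z<y'<t$. By minimality and injectivity, $\last(y)<\last(y')$, so $y\in A_t$ and $y>z$. This shows the records are cofinal in $\sdowncl{t}$.

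It remains to check finiteness. Fix $z<t$. Every record below $z$ lies in $A_z$ by $(\star)$, taking $s=z\le u=t$. If $z$ is a successor, $A_z$ is finite by (ii). If $z$ is a limit, pick the successor $z'$ of $z$ in $\downcl{t}$; then the records of $t$ below $z$ lie below $z'$, so they lie in $A_{z'}$ and are again finitely many.

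Thus $A_t$ is an infinite chain, by cofinality in the limit $\sdowncl{t}$, whose initial segments below any $z<t$ are finite. Such a well-ordered set has order type $\omega$. It is the main step needing care, though it is short.
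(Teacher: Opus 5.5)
Your proof is correct and is essentially the paper's. The one deviation is the finiteness and order-type step of (iii): you apply $(\star)$ and (ii) at $z$ (or at its successor $z'$ in $\downcl{t}$), whereas the paper observes directly that every record below $z$ has last value less than $\last(y)$, and that by (i) the strictly increasing last values embed $A_t$ order-preservingly into $\N$, which gives order type $\omega$ without a case split on $z$.

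One small fix in (ii): $s\mapsto s^\star$ is not injective on all successors, since siblings share a predecessor. It is injective on chains such as $A_t\subseteq\downcl{t}$, and that is all you need.
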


\begin{proof}
(i) is the definition for $y=s'$.
(ii) Every record $s$ of $t$ satisfies $\last(s)\le\last(t)$, with equality only for $s=t$, so by~(i) the records of $t$ have distinct last values in $\{1,\dots,\last(t)\}$; and distinct records have distinct immediate predecessors.
(iii) The nodes $y\in\Sigma(T^*)$ with $z<y<t$ have pairwise distinct last values, so one of them has the least last value, and it is a record of $t$ because every $y'\in\Sigma(T^*)$ with $y<y'\le t$ also lies in~$(z,t)$.
By~(i), the last values of the records strictly increase, so the records form a sequence of order type at most~$\omega$, and of order type exactly $\omega$ as they are cofinal.
A record below $z$ has a smaller last value than this node~$y$, so there are only finitely many of them.
\end{proof}

\begin{definition}[\cite{BowlerPitzChromatic}]
The graph $\BG$ has vertex set $V(\BG)=\Tc$ and edge set $E(\BG)=\{t't:t\in\Tc,\ t'\in A^\star_t\}$.
\end{definition}

So the neighbours of a node $t$ below it are the elements of~$A^\star_t$; the immediate predecessors of the records of a limit may themselves be limits.
\begin{lemma}[{\cite[Remark~(1) and~(${\star}$)]{BowlerPitzChromatic}}]\label{GammaTGraph}
The graph $\BG$ is a $T$-graph of finite adhesion on~$\Tc$.
More precisely, $N(\upcl q)\subseteq A^\star_q$ for every $q\in\Sigma(\Tc)$, and $|A^\star_q|\le\last(q)$.
\end{lemma}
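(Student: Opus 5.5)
We have to prove \cref{GammaTGraph}: that $\BG$ is a $T$-graph on $\Tc$, that $N(\upcl q)\subseteq A^\star_q$ for every $q\in\Sigma(\Tc)$, and that $|A^\star_q|\le\last(q)$. The plan is to check the $T$-graph axioms straight from the definition of records, then prove the neighbourhood inclusion using~($\star$) and \ref{K3}, and finally read off the bound from \cref{Records}(ii).

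\emph{$T$-graph axioms.} First I check that $\Tc$ is down-closed in $T^*$: restricting a sequence only removes values, so a co-infinite sequence stays co-infinite. Hence every $s^\star$ with $s\in A_t$, $t\in\Tc$, lies in~$\Tc$, and $E(\BG)$ is well defined. Each edge $t't$ with $t'\in A^\star_t$ satisfies $t'<t$, so its ends are comparable. For cofinality of the down-neighbours of a node~$t$, I split into cases:
\begin{itemize}
\item If $t$ is the root, there is nothing to check.
\item If $t$ is a successor, then $t\in A_t$, so $t^\star\in A^\star_t$. This set is cofinal in $\sdowncl t$.
\item If $t$ is a limit, then by \cref{Records}(iii) the records of $t$ are cofinal in $\sdowncl t$. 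Their predecessors are therefore cofinal too: any $z<t$ lies below some record $s$ with $z<s$, and since $t$ is a limit we can choose $s$ above the successor of~$z$, giving $z\le s^\star$.
\end{itemize}

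\emph{Neighbourhood inclusion.} Let $q\in\Sigma(\Tc)$ and $x\in N(\upcl q)$. By \ref{K3} we have $x<q$, and $x$ is adjacent to some $u\ge q$. Since $x<u$ and $x$ is a down-neighbour of~$u$, we get $x\in A^\star_u$. Now $x<q\le u$, so $x\in A^\star_u\cap\sdowncl q$, and ($\star$) gives $x\in A^\star_q$. The only point to double-check is ($\star$) itself. If $s\le q\le u$ is a record of $u$ with $s^\star<q$, then $s\le q$. Its last value is below all later values up to~$u$, hence in particular below all later values up to~$q$, so $s\in A_q$.

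\emph{Finite adhesion and the bound.} \cref{Records}(ii) gives $|A^\star_q|\le\last(q)$. Hence $N(\upcl q)$ is finite for every successor~$q$, which is exactly finite adhesion.

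No step is a serious obstacle. The most care is needed in the limit case of cofinality, making sure the predecessors $s^\star$, and not merely the records~$s$, are cofinal below~$t$.
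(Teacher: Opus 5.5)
Your proposal is correct and follows the paper's proof essentially step for step: the $T$-graph axioms come from $t\in A_t$ and \cref{Records}(iii), the inclusion $N(\upcl q)\subseteq A^\star_q$ from \ref{K3} together with~$(\star)$, and the bound from \cref{Records}(ii). Your additional checks — that $\Tc$ is down-closed, that the predecessors $s^\star$ (not just the records) are cofinal below a limit, and the verification of~$(\star)$ itself — are details the paper leaves implicit.
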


\begin{proof}
The ends of every edge are comparable.
Every $t\in\Sigma(\Tc)$ is adjacent to $t^\star$, as $t\in A_t$, and every limit $\mu$ other than the root is adjacent to the elements of $A^\star_\mu$, which are cofinal in $\sdowncl\mu$ by \cref{Records}(iii).
So $\BG$ is a $T$-graph.
Now let $q\in\Sigma(\Tc)$, and let $u\in\upcl q$ be adjacent to $x\notin\upcl q$.
Then $x<q$ by~\ref{K3}, and $x<u$, so $x\in A^\star_u\cap\sdowncl q\subseteq A^\star_q$ by~$(\star)$.
Finally, $|A^\star_q|\le\last(q)$ by \cref{Records}(ii).
\end{proof}

\begin{lemma}\label{GammaEnds}
Every end of $\BG$ has combined degree~$\aleph_0$: it has degree $\aleph_0$ and is dominated by infinitely, but only countably, many vertices.
\end{lemma}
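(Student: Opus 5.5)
The upper bounds come directly from the general theory of \cref{sec_prelims}, so the real work is in the two lower bounds. By \cref{GammaTGraph}, $\BG$ is a $T$-graph of finite adhesion on~$\Tc$, so every end has countable degree by~\ref{K4}. As noted after \cref{Dominators}, $\Tc$ is Hausdorff, so every vertex dominating $\eps_\varrho$ lies in the countable chain $\varrho$ or is its unique top. Hence $\dom(\eps_\varrho)\le\aleph_0$. It remains to show that $\eps_\varrho$ is dominated by infinitely many vertices and has infinitely many disjoint rays.

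\textbf{Dominators.} I would work with the sequence $\tau:=\bigcup\varrho\in T^*$, which is a limit of $T^*$ even if it is not co-infinite. By \cref{Records}(iii) applied in~$T^*$, $\tau$ has infinitely many records $y\in\varrho$. I claim that for each such record $y$, the vertex $x:=y^\star$ dominates~$\eps_\varrho$.

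To see this, take any $w\in\varrho$ with $w\ge y$. Since $w$ is co-infinite, infinitely many values are missing from $\im(w)$, and all but finitely many of these exceed $\last(y)$. Let $t_w$ be $w$ followed by an increasing $\omega$-sequence consisting of every other such missing value; then $t_w$ is still co-infinite, so $t_w\in\Tc$, and it is a limit. We check its records:
\begin{itemize}
\item $y$ is a record of $t_w$: its last value is below all later values on $\varrho$ because $y$ is a record of $\tau$, and below all appended values by construction. So $t_w$ is adjacent to $y^\star=x$.
\item The first appended node $w\conc a_1$ is a record of $t_w$, since the appended values increase. So $t_w$ is adjacent to~$w$.
\end{itemize}
Thus $x\,t_w\,w$ is an $x$--$w$ path. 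Choosing infinitely many distinct $w\in\varrho$ above $y$, the nodes $t_w$ are distinct and lie off~$\varrho$, so these paths are disjoint except at~$x$. Their endpoints $w$ are cofinal in~$\varrho$, and by \cref{Ends}(iii) there is a ray of $\eps_\varrho$ inside $\varrho$ meeting every final segment of~$\varrho$. From this, $x$ dominates~$\eps_\varrho$. (If the ray does not pass exactly through the chosen $w$'s, I would link each $w$ into it through a short interval path given by~\ref{K1}, keeping the infinitely many links disjoint by choosing the $w$'s sparsely.)

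\textbf{Degree.} Here I would use the standard fact that an end dominated by infinitely many vertices has infinite degree. Given $k$, fix dominators $x_1<\dots<x_k$ in~$\varrho$ and a ray $R$ of~$\eps_\varrho$. I would then build $k$ disjoint rays greedily in $\omega$ rounds. In each round, each current ray is extended by a path from its endpoint to a dominator, or from a dominator to a far-out vertex of~$R$. Infinitely many fan paths from each $x_i$ are available, so at every stage one can be found avoiding the finitely many vertices already used.

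An alternative I would try first, since it stays within the construction above, is to take the rays directly from the gadgets $x_i\,t_w\,w$. For each $i$, use distinct infinite sets of $w$'s. Then route each ray through its own gadgets and through disjoint stretches of~$\varrho$ between them, using~\ref{K1} on disjoint intervals of~$\varrho$.

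\textbf{Main obstacle.} I expect the main difficulty to be making this disjoint routing precise. The greedy fan argument is the fallback because it is routine.
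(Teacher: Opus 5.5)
Your upper bounds are right, and so is your domination argument, which takes a different route from the paper's. You check correctly that $y$ and $w\conc a_1$ are records of the auxiliary limit $t_w$, so $y^\star\,t_w\,w$ is a path. There is one small repair. With ``every other missing value'', the sequence $\tau$ may continue after $w$ with exactly these values, so $t_w$ may lie in~$\varrho$. Choose the first appended value different from the next value of~$\tau$; this makes the $t_w$ distinct and off~$\varrho$.

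The degree half, however, does not work as sketched. Both of your sketches tie the rays to finitely many fixed dominators. In the greedy version you fix $x_1<\dots<x_k$ and return to them in every round; in the alternative, ray~$i$ is routed only through gadgets of~$x_i$. But a ray passes through each vertex at most once, so it can use at most one fan path at $x_i$, or at most two gadgets containing~$x_i$. After finitely many rounds no dominator is left to route through, and the single ray $R$ plus finitely many jumps cannot give $k$ disjoint rays.

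The standard fact you quote is true, but its proof needs a fresh dominator at every step. You have infinitely many: $y^\star$ for all records $y$ of~$\tau$. Each of them lies in $C(F,\eps_\varrho)$ for every finite $F$ not containing it. So you can extend the $k$ paths one at a time, from a dominator to a new dominator, avoiding all vertices used so far. A ray through infinitely many dominators of $\eps_\varrho$ then belongs to $\eps_\varrho$: every finite $X$ misses infinitely many of them, and they all lie in $C(X,\eps_\varrho)$.

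The paper needs no routing at all. Your own record check shows that for records $y<y'$ of~$\tau$, the node $y$ is also a record of $y'^\star$ (this is~$(\star)$), so $y^\star y'^\star$ is an edge. Hence the predecessors of the records of $\tau$ span an infinite clique cofinal in~$\varrho$. Splitting it into infinitely many infinite sets gives infinitely many disjoint rays of $\eps_\varrho$, by \cref{Ends}(iii). Every clique vertex is adjacent to all other vertices of these rays, so it dominates~$\eps_\varrho$. In your gadget, taking $w:=y'^\star$ already makes $t_w$ superfluous.
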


\begin{proof}
Every end has countable combined degree by \cref{Dominators} and~\ref{K4}, as $\Tc$ is Hausdorff.
Now let $\varrho$ be a high-ray of~$\Tc$.
The sequence $u:=\bigcup\varrho$ is a limit of $T^*$ other than the root, and its records $x_1<x_2<\cdots$ lie in $\varrho$ and are cofinal in it, by \cref{Records}(iii).
For $i<j$ we have $x_i^\star<x_j^\star\le u$, so $x_i^\star\in A^\star_u\cap\sdowncl{x_j^\star}\subseteq A^\star_{x_j^\star}$ by~$(\star)$, and $x_i^\star x_j^\star$ is an edge of~$\BG$.
So the vertices $x_1^\star<x_2^\star<\cdots$ span an infinite clique, and they are cofinal in~$\varrho$; see \cref{fig_clique}.
Partition them into infinitely many infinite sets; each of them spans a ray whose vertex set is contained and cofinal in~$\varrho$, and these rays are disjoint and belong to $\eps_\varrho$ by \cref{Ends}(iii).
Every $x_i^\star$ is adjacent to all vertices of these rays, so it dominates~$\eps_\varrho$.
\end{proof}

\begin{figure}[h!]
\centering
\begin{tikzpicture}[
    val/.style={circle,draw=blue!55!black,fill=white,thin,inner sep=0pt,minimum size=13pt,font=\scriptsize},
    rec/.style={val,draw=red!80!black,thick},
    pre/.style={val,draw=orange!90!black,thick},
    gamE/.style={orange!90!black,semithick},
  ]
\def\dx{0.72}
\draw (0,0) -- (14*\dx+0.25,0);
\draw[densely dotted] (14*\dx+0.3,0) -- (14*\dx+1.0,0);
\draw[->] (14*\dx+1.05,0) -- (14*\dx+1.6,0) node[right] {$\varepsilon_\varrho$};
\node[circle,fill=black,inner sep=0pt,minimum size=2.6pt] at (0,0) {};
\foreach \i/\v/\st in {1/7/val,2/12/val,3/3/val,4/10/pre,5/2/rec,6/9/val,7/13/pre,8/4/rec,9/15/val,10/11/pre,11/5/rec,12/14/val,13/16/pre,14/6/rec}
  \node[\st] (n\i) at (\i*\dx,0) {$\v$};
% the clique on the predecessors of the records
\foreach \a/\b [evaluate=\a as \h using 0.30+0.11*(\b-\a)] in {4/7,4/10,4/13,7/10,7/13,10/13}
  \draw[gamE] (n\a.north) .. controls ($(n\a.north)+(0.2,\h)$) and ($(n\b.north)+(-0.2,\h)$) .. (n\b.north);
% edges continue towards later predecessors
\foreach \a [evaluate=\a as \h using -0.35-0.06*(14-\a), evaluate=\a as \xe using 14*0.72+1.0] in {4,7,10,13}
  \draw[gamE,densely dashed] (n\a.south) .. controls ($(n\a.south)+(0.3,\h)$) and (\xe,\h) .. (\xe+0.2,-0.15);
\end{tikzpicture}
\caption{The predecessors of the records of $\bigcup\varrho$ span an infinite clique in~$\BG$.}
\label{fig_clique}
\end{figure}

Finally, $\BG$ has $2^{\aleph_0}$ vertices: $\Tc$ contains the $2^{\aleph_0}$ injective sequences of length~$\omega$ with even values, and $T^*$ has at most $\aleph_1\cdot2^{\aleph_0}=2^{\aleph_0}$ elements.

\section{No spanning tree contains a ray of every end}\label{sec_proof}

Throughout this section, $S$ is a spanning tree of~$\BG$, which we regard as rooted at the root $r$ of~$\Tc$; so for every vertex $v$ there is a unique path $vSr$ in $S$ from $v$ to~$r$.
We find an end of $\BG$ that contains no ray of~$S$.

\begin{definition}[Exits]
For a vertex $v\ne r$, the \emph{exit} $h(v)$ of $v$ is the first vertex of the path $vSr$ that does not lie in the up-closure~$\upcl{v}$.
\end{definition}

\begin{lemma}\label{Exit}
For every vertex $v\ne r$ we have $h(v)<v$.
\end{lemma}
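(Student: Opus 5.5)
First we check that the exit is well defined. Since $v\ne r$ and $r\notin\upcl{v}$ (the root lies strictly below every other node), the path $vSr$ starts in $\upcl{v}$ and ends outside it. So it has a first vertex $h(v)$ outside $\upcl{v}$, and $h(v)\ne v$. Let $u$ be the vertex of $vSr$ immediately before $h(v)$. By the choice of $h(v)$ we have $u\in\upcl{v}$, and $uh(v)$ is an edge of $S\subseteq\BG$.

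The key step is to apply~\ref{K3} to the node $v$. This is valid because $\BG$ is a $T$-graph on~$\Tc$ by \cref{GammaTGraph}. The vertex $h(v)$ lies outside $\upcl{v}$ and has the neighbour $u\in\upcl{v}$, so $h(v)\in N(\upcl{v})\subseteq\sdowncl{v}$. This gives $h(v)<v$.

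Spelled out, the argument of~\ref{K3} is as follows. The ends of every edge of $\BG$ are comparable, so $h(v)$ is comparable with~$u$. Since $h(v)\notin\upcl{v}$ and $v\le u$, we cannot have $h(v)\ge u$, so $h(v)<u$. Now $h(v)$ and $v$ both lie in the chain $\downcl{u}$, so they are comparable. As $h(v)\notin\upcl{v}$, it follows that $h(v)<v$.

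No step is a real obstacle. The only point needing care is that $\upcl{v}$ contains $v$ but not $r$, which ensures that the exit exists.
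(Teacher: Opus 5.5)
Your proposal is correct and follows the paper's proof: take the vertex $u$ of $vSr$ just before $h(v)$ and apply~\ref{K3} to the edge $uh(v)$, using $u\in\upcl{v}$ and $h(v)\notin\upcl{v}$. Your added check that the exit exists, because $r\notin\upcl{v}$, is a detail the paper leaves implicit.
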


\begin{proof}
The vertex $u$ of $vSr$ preceding $h(v)$ lies in $\upcl{v}$ and is adjacent to $h(v)\notin\upcl{v}$.
So $h(v)\in\sdowncl{v}$ by~\ref{K3}.
\end{proof}

The exit tells us how $S$ leaves the up-closure of $v$ on its way to the root.
Inside $\upcl v$, the path $vSr$ may wander up and down arbitrarily; but it has to leave $\upcl v$ eventually, and when it does, it drops to a node below~$v$.
If $S$ were obtained by choosing one down-neighbour for every vertex other than the root, then $h(v)$ would simply be the chosen down-neighbour of~$v$; for arbitrary trees, $h$ plays the same role; see \cref{fig_exit}.

\begin{figure}[h!]
\centering
\begin{tikzpicture}[
    dot/.style={circle,fill=blue!55!black,inner sep=0pt,minimum size=3.4pt},
    path/.style={green!45!black,line width=1.1pt},
    exitE/.style={orange!90!black,line width=1.1pt},
  ]
% the chain below v and the up-closure of v
\fill[blue!6] (3,0) -- (6.4,-2.0) -- (6.4,2.0) -- cycle;
\draw[blue!40] (6.4,-2.0) -- (3,0) -- (6.4,2.0);
\node[blue!60!black] at (5.95,1.2) {$\upcl v$};
\draw (0,0) -- (3,0);
% nodes
\node[dot,label=below:{$r$}] (r) at (0,0) {};
\node[dot,label={[yshift=1pt]above:{$h(v)$}}] (h) at (1.5,0) {};
\node[dot] (h2) at (0.8,0) {};
\node[dot,label=above left:{$v$}] (v) at (3,0) {};
\node[dot] (a) at (3.8,0.3) {};
\node[dot] (b) at (4.7,0.75) {};
\node[dot] (c) at (5.5,0.3) {};
\node[dot] (d) at (4.6,-0.3) {};
\node[dot] (u) at (5.4,-0.95) {};
% the path vSr: wanders in the up-closure, then exits along one edge
\draw[path] (v) -- (a) -- (b) -- (c) -- (d) -- (u);
\draw[exitE] (u) .. controls (4.2,-2.6) and (1.9,-1.6) .. (h);
\draw[path,densely dashed] (h) -- (h2) -- (r);
\node[green!35!black] at (4.05,1.55) {$vSr$};
\end{tikzpicture}
\caption{The first vertex of the path $vSr$ outside the up-closure of $v$ is its exit~$h(v)$, which lies below~$v$.}
\label{fig_exit}
\end{figure}

\subsection*{Extensions}
For $t\in\Sigma(\Tc)$, we say $t'\in\Tc$ is an \emph{extension of~$t$} if $t'\ge t$ and $\im(t')\setminus\im(t)\subseteq\{n\in\N:n>\last(t)\}$.
We say $t'\in\Tc$ is a \emph{$1$-extension of~$t$} if it has the stronger property that $t'>t$ and, letting $a_1$ be the minimal element of $\N\setminus\im(t)$ with $a_1>\last(t)$, we have $\im(t')\setminus\im(t)\subseteq\{n\in\N:n>a_1\}$.
In this case we also say that the $1$-extension $t'$ \emph{skips}~$a_1$.
These notions are taken from~\cite[Section~3]{BowlerPitzChromatic}, where two diagonal arguments of the same kind show that $\BG$ has uncountable chromatic number.

Skipped values are reserves.
If $t'$ is a $1$-extension of $t$ skipping~$a_1$, then $a_1$ is still unused at~$t'$, and it is smaller than all values of $t'$ after~$t$.
Appending it later creates a node whose last value is a new minimum: it becomes a record of every node above it that adds only larger values.
The first diagonal argument below keeps such reserves unused forever, and the second appends them.

\subsection*{The first diagonal}
The claim below and the two subsequent diagonal arguments go back to our unpublished notes from 2023, where they show that no spanning tree of~$\RG$ built from one down-neighbour for every vertex other than the root contains a ray of every end.

\begin{quote}
\emph{Claim.} Every $t$ in $\Sigma(\Tc)$ has an extension $t'$ in $\Sigma(\Tc)$ such that every $1$-extension $t''$ of $t'$ that is a limit satisfies $h(t'')>t$.
\end{quote}
This is the claim of~\cite[Section~3]{BowlerPitzChromatic}, with $h(t'')>t$ in place of $c(t'')>c(t^\star)$ for a colouring~$c$, and only for those $1$-extensions $t''$ that are limits.
As $h(t'')$ and $t$ both lie in the chain $\sdowncl{t''}$, the negation of $h(t'')>t$ is $h(t'')\le t$.

\begin{lemma}\label{FirstDiagonal}
The claim is false: there exists $t_0\in\Sigma(\Tc)$ such that all its extensions $t'\in\Sigma(\Tc)$ have a $1$-extension $t''$ that is a limit with $h(t'')\le t_0$.
\end{lemma}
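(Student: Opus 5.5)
We argue by contradiction. Assuming the claim holds for every node of $\Sigma(\Tc)$, we build an $\omega$-chain whose union $t$ is a limit of $\Tc$ and a $1$-extension of every node in the chain. The claim then forces the exit $h(t)$ above every node of the chain, which contradicts \cref{Exit}. This is the same diagonal as in~\cite[Section~3]{BowlerPitzChromatic}, with $h$ in place of the colouring.

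\emph{Construction.} Let $t_0\in\Sigma(\Tc)$ be arbitrary, say the sequence $(1)$. Suppose $t_n\in\Sigma(\Tc)$ is defined, and do the following.
\begin{enumerate}[(a)]
\item Apply the claim to $t_n$ to obtain an extension $t_n'\in\Sigma(\Tc)$ of~$t_n$.
\item Let $a^{(n)}$ be the least element of $\N\setminus\im(t_n')$ with $a^{(n)}>\last(t_n')$. This exists because $t_n'$ is co-infinite.
\item Pick $b_n\in\N\setminus\im(t_n')$ with $b_n>a^{(n)}$, and put $t_{n+1}:=t_n'\conc b_n$. This is again a co-infinite successor, and $t_{n+1}>t_n'$.
\end{enumerate}
This gives a chain $t_0\le t_0'<t_1\le t_1'<t_2\le\cdots$ of countable ordinal lengths. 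Its union $t$ is an injective sequence of countable limit length.

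\emph{Key inequalities.} By induction we check that every value added after $t_n'$ exceeds~$a^{(n)}$.
\begin{itemize}[leftmargin=*]
\item Each value added in passing from $t_m$ to $t_m'$ exceeds $\last(t_m)$, because $t_m'$ is an extension of~$t_m$. Hence $\last(t_m')\ge\last(t_m)=b_{m-1}>a^{(m-1)}$.
\item It follows that $a^{(m)}>\last(t_m')>a^{(m-1)}$, so the $a^{(n)}$ strictly increase.
\item Also $b_m>a^{(m)}$. Every value of $t$ beyond $t_n'$ is one of $b_m$, $m\ge n$, or a value added in some step $t_m\to t_m'$ with $m>n$. In both cases it exceeds $a^{(n)}$.
\end{itemize}

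\emph{Consequences.} No $a^{(n)}$ is ever used. They are infinitely many distinct numbers outside $\im(t)$, so $t$ is co-infinite and $t\in\Tc$ is a limit. Moreover, for every $n$ we have $t>t_n'$ and $\im(t)\setminus\im(t_n')\subseteq\{k:k>a^{(n)}\}$. So $t$ is a $1$-extension of $t_n'$ skipping $a^{(n)}$, and it is a limit. The claim for $t_n$ therefore gives $h(t)>t_n$ for every~$n$. But by \cref{Exit} we have $h(t)<t$. Since the $t_n$ are cofinal in $\sdowncl t$, some $t_n$ lies above $h(t)$, which is a contradiction. So the claim fails at some $t_0$.

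Since $h(t'')$ and $t_0$ both lie in the chain $\sdowncl{t''}$, failure of the claim at $t_0$ reads: every extension $t'\in\Sigma(\Tc)$ of $t_0$ has a $1$-extension $t''$ that is a limit with $h(t'')\le t_0$. This is the statement of the lemma.

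The main obstacle is the bookkeeping in the key inequalities. We must ensure that the later extensions supplied by the claim never use an earlier reserve $a^{(n)}$. This holds because every such extension only adds values above its own base's last value, and these last values climb above all earlier reserves.
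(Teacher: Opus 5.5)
Your proof is correct and follows the paper's argument: you build the same $\omega$-chain by applying the claim and then appending a free value above a skipped reserve, and the union is a limit $1$-extension of every stage, so the claim forces $h$ of the union above every stage, contradicting $h(t)<t$. The only difference is cosmetic: the paper appends exactly the second-smallest free value above the last value of $s'_n$, whereas you allow any free value above the reserve $a^{(n)}$, which changes nothing in the argument.
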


\begin{proof}
Suppose for a contradiction that the claim holds.
Fix an arbitrary $s_0\in\Sigma(\Tc)$.
Let $s'_0\in\Sigma(\Tc)$ be an extension of $s_0$ as in the claim.
Let $a_1<a_2$ be the two smallest elements of $\N\setminus\im(s'_0)$ above $\last(s'_0)$.
Let $s_1:=s'_0\conc a_2$.
Let $s'_1$ be an extension of $s_1$ as in the claim.
Let $a_3<a_4$ be the two smallest elements of $\N\setminus\im(s'_1)$ above $\last(s'_1)$.
Let $s_2:=s'_1\conc a_4$.
And so on.

Then $\hat s:=\bigcup_n s_n$ is an injective sequence of limit length, and all its values after $s'_n$ are at least $a_{2n+2}$, as $s'_{n+1}$ is an extension of $s_{n+1}=s'_n\conc a_{2n+2}$ and $a_{2n+4}>\last(s'_{n+1})$.
Moreover, $a_1,a_3,a_5,\dots$ witness that $\hat s$ is co-infinite, giving $\hat s\in\Tc$.
But for each~$n$, the limit $\hat s$ is a $1$-extension (skipping $a_{2n+1}$) of the extension $s'_n$ of~$s_n$, so $h(\hat s)>s_n$ according to the claim.
However, $h(\hat s)<\hat s$ by \cref{Exit}, so $h(\hat s)\le s_n$ for some~$n$, a contradiction.
\end{proof}

From now on, we fix $t_0$ as in \cref{FirstDiagonal}.
So arbitrarily high up above $t_0$ there are limits $t''$ such that the path $t''Sr$, when it leaves the up-closure of~$t''$, drops to a vertex at or below~$t_0$.

\subsection*{The second diagonal}
We now build a high-ray through limits as in \cref{FirstDiagonal}, joined by \emph{boundaries} that carry small values.
For the extension $t'_0=t_0$ of~$t_0$, there is a $1$-extension $t''_0$ of $t'_0$ skipping $a_1\in\N$ that is a limit with $h(t''_0)\le t_0$.
Let $t'_1:=t''_0\conc a_1$.
Then $t'_1\in\Sigma(\Tc)$ is itself an extension of~$t_0$ (\cref{Bookkeeping}), so it has a $1$-extension $t''_1$ skipping $a_2$ that is a limit with $h(t''_1)\le t_0$.
Let $t'_2:=t''_1\conc a_2$.
And so on.
We call the nodes $t'_1<t'_2<\cdots$ the \emph{boundaries}, and for $n\ge0$ the nodes $y\in\Sigma(\Tc)$ with $t'_n<y<t''_n$ form the \emph{$n$th stage}.

\begin{lemma}\label{Bookkeeping}
The recursion is well defined, and for every $n\ge0$:
\begin{enumerate}[(i)]
\item every node $y$ in the $n$th stage satisfies $\last(y)>a_{n+1}$;
\item $\last(t_0)<a_{n+1}<a_{n+2}$;
\item $t'_{n+1}\in\Sigma(\Tc)$ is an extension of $t_0$, with $\last(t'_{n+1})=a_{n+1}$.
\end{enumerate}
\end{lemma}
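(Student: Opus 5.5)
We prove all three items together by induction on $n$, where the induction also shows that the recursion can be carried out up to stage $n$. For the base case $n=0$, note that $t'_0=t_0$ is an extension of itself, since $\im(t_0)\setminus\im(t_0)=\emptyset$. So \cref{FirstDiagonal} supplies a $1$-extension $t''_0$ of $t_0$ that is a limit with $h(t''_0)\le t_0$. Let $a_1$ be the value it skips: $a_1$ is the least element of $\N\setminus\im(t_0)$ above $\last(t_0)$, and every value of $t''_0$ after $t_0$ exceeds $a_1$. This gives (i) for $n=0$ directly. Indeed, a node $y$ of stage~$0$ satisfies $t_0<y<t''_0$ and has successor length, so $\last(y)\in\im(t''_0)\setminus\im(t_0)$, hence $\last(y)>a_1$.

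For the inductive step, suppose $t'_n$ is an extension of $t_0$ lying in $\Sigma(\Tc)$. If $n\ge1$, this is (iii) for $n-1$. Then \cref{FirstDiagonal} gives a limit $1$-extension $t''_n$ of $t'_n$ skipping $a_{n+1}$ with $h(t''_n)\le t_0$.
\begin{itemize}
\item[(i)] Since $t''_n$ is a $1$-extension of $t'_n$, all values of $t''_n$ after $t'_n$ exceed $a_{n+1}$. This is (i) for~$n$, by the same argument as in the base case.
\item[(ii)] By definition $a_{n+1}>\last(t'_n)$. For $n=0$ this is $a_1>\last(t_0)$. For $n\ge1$ we have $\last(t'_n)=a_n$ by (iii) for $n-1$, and (ii) for $n-1$ gives $a_n>\last(t_0)$, so $a_{n+1}>a_n>\last(t_0)$. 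The inequality $a_{n+1}<a_{n+2}$ is proved in the same way one step later, using $\last(t'_{n+1})=a_{n+1}$ and $a_{n+2}>\last(t'_{n+1})$. Alternatively, we can state (ii) as $\last(t_0)<a_{n+1}$ together with $a_{n+1}<a_{n+2}$ and establish each half as soon as the relevant term is defined.
\item[(iii)] Since $t''_n$ skips $a_{n+1}$, the value $a_{n+1}$ does not lie in $\im(t''_n)$. So $t'_{n+1}:=t''_n\conc a_{n+1}$ is injective with last value $a_{n+1}$. Its image misses only one more natural number than the co-infinite $t''_n$, so it is co-infinite and lies in $\Sigma(\Tc)$. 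To see that it is an extension of $t_0$, we need every value of $t'_{n+1}$ outside $\im(t_0)$ to exceed $\last(t_0)$. Such a value is either $a_{n+1}$, which exceeds $\last(t_0)$ by (ii), or a value of $t'_n$ outside $\im(t_0)$, which is handled by induction since $t'_n$ is an extension of $t_0$, or a value of $t''_n$ after $t'_n$, which exceeds $a_{n+1}>\last(t_0)$.
\end{itemize}

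The main obstacle is arranging the induction in the right order, since (iii) for $n$ relies on (ii) for $n$, which in turn relies on (iii) for $n-1$. Once we induct on the conjunction of the statements "the recursion is defined up to $t'_{n}$, $t'_n$ extends $t_0$, and $\last(t'_n)=a_n$", each item is a direct unwinding of the definitions of extension and $1$-extension. The only subtle point is remembering that the values of $t''_n$ between $t'_n$ and $t''_n$ all exceed the skipped value $a_{n+1}$.
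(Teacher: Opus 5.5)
Your proof is correct and is essentially the paper's induction: the paper's hypothesis is that $t'_n\in\Sigma(\Tc)$ extends $t_0$ with $\last(t'_n)\ge\last(t_0)$, it proves (i)--(iii) in the same order, and it likewise postpones $a_{n+1}<a_{n+2}$ until (iii) is known. The only differences are cosmetic: in (iii) the paper bounds every value after $t_0$ by earlier stages and boundaries, whereas you cite that $t'_n$ extends $t_0$; and there is one wording slip in your (iii), since $t''_n\conc a_{n+1}$ misses one \emph{fewer} natural number than $t''_n$, not one more, though it is co-infinite all the same.
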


\begin{proof}
We use induction on~$n$; suppose that $t'_n\in\Sigma(\Tc)$ is an extension of~$t_0$ with $\last(t'_n)\ge\last(t_0)$, as is the case for $t'_0=t_0$.
Then $a_{n+1}$ and $t''_n$ are defined.

(i) The value $\last(y)$ is a value of $t''_n$ that is not a value of~$t'_n$, and $t''_n$ is a $1$-extension of $t'_n$ skipping~$a_{n+1}$.

(ii) We have $a_{n+1}>\last(t'_n)\ge\last(t_0)$ by the choice of~$a_{n+1}$; once (iii) is shown, $a_{n+2}>\last(t'_{n+1})=a_{n+1}$ in the same way.

(iii) The value $a_{n+1}$ is not a value of~$t'_n$, and by~(i) it is not a value added in the $n$th stage either; so $t'_{n+1}$ is an injective sequence, with $\last(t'_{n+1})=a_{n+1}$, and it is co-infinite as $t''_n$ is.
Every value of $t'_{n+1}$ after $t_0$ is a value added in a stage $k\le n$, which exceeds $a_{k+1}>\last(t_0)$ by~(i) and~(ii), or a boundary value $a_{k+1}>\last(t_0)$ with $k\le n$.
So $t'_{n+1}$ is an extension of~$t_0$ with $\last(t'_{n+1})\ge\last(t_0)$, which completes the induction.
\end{proof}

The remaining two lemmas of this section, which show that $S$ misses an end, were found by Claude and rewritten by us (see Figure~\ref{fig_diagonal} for an illustration).

\begin{lemma}[Bottlenecks]\label{Bottleneck}
For every $n\ge0$, we have $N(\upcl{t'_{n+1}})\subseteq A^\star_{t'_{n+1}}\subseteq\{t''_0,\dots,t''_n\}\cup A^\star_{t_0}$, and $A^\star_{t_0}\subseteq\sdowncl{t_0}$ is finite.
\end{lemma}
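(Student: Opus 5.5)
The first inclusion $N(\upcl{t'_{n+1}})\subseteq A^\star_{t'_{n+1}}$ is \cref{GammaTGraph}, since $t'_{n+1}\in\Sigma(\Tc)$ by \cref{Bookkeeping}(iii). For the last assertion, $A^\star_{t_0}\subseteq\sdowncl{t_0}$ holds because immediate predecessors of nodes $s\le t_0$ lie strictly below $t_0$. Finiteness follows from \cref{Records}(ii), which gives $|A^\star_{t_0}|\le\last(t_0)$. So the substance is the middle inclusion, and I would prove it by determining the records of $t'_{n+1}$ directly.

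Let $s$ be a record of $t'_{n+1}$; we locate $s$ along the chain $\downcl{t'_{n+1}}$. The successor nodes $y$ with $t_0<y\le t'_{n+1}$ fall into two kinds. Either $y$ lies in the $k$th stage for some $k\le n$, or $y$ is a boundary $t'_{k+1}$ with $k\le n$. The second case uses that each $t''_k$ is a limit, so the successors in $(t'_k,t'_{k+1}]$ are exactly the stage-$k$ nodes together with $t'_{k+1}=t''_k\conc a_{k+1}$.

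If $y$ is in stage $k<n$, then $\last(y)>a_{k+1}$ by \cref{Bookkeeping}(i). But the later node $t'_{k+1}$ has last value $a_{k+1}<\last(y)$, so $y$ is not a record. The same argument applies to $y$ in stage $n$, using the later node $t'_{n+1}$ with $\last(t'_{n+1})=a_{n+1}$. Hence the only possible records strictly above $t_0$ are the boundaries $t'_{k+1}$ with $k\le n$, and their immediate predecessors are $t''_k\in\{t''_0,\dots,t''_n\}$.

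If instead $s\le t_0$, then $s^\star\in A^\star_{t'_{n+1}}\cap\sdowncl{t_0}$ and $s^\star<t_0$, so $(\star)$ gives $s^\star\in A^\star_{t_0}$.

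The one point needing care is the case split: no successor $y$ with $t_0<y\le t'_{n+1}$ may fall outside both kinds. This holds because $t'_0=t_0$ and the intervals $(t'_k,t'_{k+1}]$ for $k\le n$ cover $(t_0,t'_{n+1}]$. It also relies on the values $a_{k+1}$ being strictly increasing, from \cref{Bookkeeping}(ii). I do not expect a real obstacle here; the argument is a direct bookkeeping check.
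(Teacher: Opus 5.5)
Your proof is correct and follows the paper's argument. The first inclusion is \cref{GammaTGraph}, records at or below $t_0$ are handled by $(\star)$, and a record strictly above $t_0$ must be a boundary $t'_{k+1}$ with predecessor $t''_k$, since a stage-$k$ node $y$ satisfies $\last(y)>a_{k+1}=\last(t'_{k+1})$ with $y<t'_{k+1}\le t'_{n+1}$; the strict monotonicity of the $a_{k+1}$ that you invoke is not actually needed.
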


\begin{figure}[h!]
\centering
\begin{tikzpicture}[
    xscale=1.0,
    dot/.style={circle,fill=blue!55!black,inner sep=0pt,minimum size=3.4pt},
    bnd/.style={circle,fill=red!80!black,inner sep=0pt,minimum size=4.6pt},
    lim/.style={circle,draw=black,fill=white,thin,inner sep=0pt,minimum size=5pt},
    sq/.style={rectangle,fill=blue!55!black,inner sep=0pt,minimum size=3.6pt},
    edgeE/.style={orange!90!black,semithick},
    exitE/.style={orange!90!black,semithick,densely dashed},
    noE/.style={gray!70,densely dotted,thick},
  ]
% clip to the drawing: the control points of the curves would enlarge the bounding box
\clip (-0.25,-2.35) rectangle (13.5,2.55);
% the cone above the boundary t'_2
\fill[red!7] (7.5,0) -- (13.2,2.0) -- (13.2,-2.0) -- cycle;
\draw[red!35] (13.2,2.0) -- (7.5,0) -- (13.2,-2.0);
\node[red!60!black] at (12.7,1.2) {$\upcl{t'_2}$};
% the branch, with breaks before the limits
\draw (0,0) -- (3.55,0);
\draw (3.85,0) -- (6.85,0);
\draw (7.15,0) -- (10.15,0);
\draw[->] (10.45,0) -- (13.0,0) node[right] {$\eps$};
\foreach \bx in {3.7,7.0,10.3}{
  \draw ($(\bx,0)+(-0.10,-0.14)$) -- ($(\bx,0)+(0.02,0.14)$);
  \draw ($(\bx,0)+(-0.02,-0.14)$) -- ($(\bx,0)+(0.10,0.14)$);
}
% root, the finite set A*_{t_0}, t_0
\node[dot,label=below:{$r$}] at (0,0) {};
\node[sq] (A1) at (0.55,0) {};
\node[sq] (A2) at (1.0,0) {};
\node[dot,label={[yshift=1pt]above:{$t_0$}}] (t0) at (1.5,0) {};
% stage nodes
\foreach \x in {2.0,2.5,3.0,3.4, 4.6,5.1,5.6,6.1,6.6, 7.9,8.4,8.9,9.4,9.9, 11.2,11.7,12.2}
  \node[dot] at (\x,0) {};
% limits t''_n and boundaries t'_{n+1}
\node[lim,label={[yshift=1pt]above:{$t''_0$}}] (l0) at (3.95,0) {};
\node[bnd,label={[yshift=-1pt]below:{$t'_1$}}] (b1) at (4.2,0) {};
\node[lim,label={[yshift=1pt]above left:{$t''_1$}}] (l1) at (7.25,0) {};
\node[bnd,label={[yshift=-1pt]below:{$t'_2$}}] (b2) at (7.5,0) {};
\node[lim,label={[yshift=1pt]above:{$t''_2$}}] (l2) at (10.55,0) {};
\node[bnd,label={[yshift=-1pt]below:{$t'_3$}}] (b3) at (10.8,0) {};
% stage labels
\node[gray] at (2.45,0.42) {\scriptsize stage $0$};
\node[gray] at (5.6,0.42) {\scriptsize stage $1$};
\node[gray] at (8.9,0.42) {\scriptsize stage $2$};
% edges into the cone above t'_2: from earlier limits and from A*_{t_0} only
\draw[edgeE] (l0) .. controls (6.0,2.4) and (9.2,2.4) .. (l2);
\draw[edgeE] (l1) .. controls (8.3,1.35) and (9.6,1.35) .. (l2);
\draw[edgeE] (A1) .. controls (3.0,3.4) and (9.0,3.1) .. (l2);
% exits of the limits t''_n drop to t_0 or below
\draw[exitE] (l0) .. controls (3.2,-1.0) and (2.0,-1.0) .. (t0);
\draw[exitE] (l1) .. controls (5.5,-2.0) and (1.6,-2.0) .. (A2);
\draw[exitE] (l2) .. controls (8.0,-3.0) and (1.4,-3.0) .. (A1);
\node[orange!80!black] at (5.6,-1.85) {\scriptsize exits $h(t''_n)\le t_0$};
\end{tikzpicture}
\caption{Bottlenecks in~$\BG$: the up-closure of a boundary $t'_{n+1}$ can be entered only from the limits $t''_0,\dots,t''_n$ or from the finite set $A^\star_{t_0}$ (squares).}
\label{fig_diagonal}
\end{figure}

\begin{proof}
The first inclusion is \cref{GammaTGraph}.
Let $y\in A_{t'_{n+1}}$.
If $y^\star<t_0$, then $y^\star\in A^\star_{t_0}$ by~$(\star)$.
Otherwise $y$ is a node of $\Sigma(\Tc)$ in $(t_0,t'_{n+1}]$, so it is a boundary $t'_{k+1}$ with $k\le n$, and then $y^\star=t''_k$, or it lies in the $k$th stage for some $k\le n$.
In the latter case $\last(y)>a_{k+1}=\last(t'_{k+1})$ by \cref{Bookkeeping}, while $y<t'_{k+1}\le t'_{n+1}$; so $y\notin A_{t'_{n+1}}$.
\end{proof}

Let $\hat\varrho:=\bigcup_{n\ge1}\downcl{t'_n}$, a high-ray of $\Tc$ containing $t_0$, all $t''_n$ and all boundaries $t'_n$, and let $\eps:=\eps_{\hat\varrho}$ be its end.

\begin{lemma}\label{Missed}
No rooted ray of $S$ belongs to~$\eps$.
\end{lemma}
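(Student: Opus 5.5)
Suppose for a contradiction that a rooted ray $R$ of $S$ belongs to~$\eps$. Then $\varrho(R)=\hat\varrho$, so by \eqref{TailCone} $R$ has a tail in $\upcl{t'_{n}}$ for every $n\ge1$. Since $R$ starts at the root $r\notin\upcl{t'_1}$, for each $n\ge1$ it enters $\upcl{t'_n}$ for the last time at a vertex $w_n$. Its predecessor on $R$ is a vertex of $N(\upcl{t'_n})$, which by \cref{Bottleneck} lies in $\{t''_0,\dots,t''_{n-1}\}\cup A^\star_{t_0}$. The set $A^\star_{t_0}$ is finite and $R$ is a path, so only finitely many of these last entries can come from $A^\star_{t_0}$. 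Moreover, the entry points into the nested cones occur along $R$ in a non-decreasing order as $n$ grows. Hence for all large $n$ the vertex of $R$ just before its final entry into $\upcl{t'_n}$ is some limit $t''_k$ with $k<n$. As $n\to\infty$, infinitely many distinct $t''_k$ lie on $R$, because each vertex is used only once and the entry points move up along $R$.

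The key step is to use the exits. Pick a limit $t''_k$ on $R$ that occurs after the tail of $R$ has entered $\upcl{t_0}$ for the last time. Such a $k$ exists: $R$ has a tail in $\upcl{t_0}$ by \eqref{TailCone} because $t_0\in\hat\varrho$, and infinitely many $t''_k$ lie on $R$. Consider the path $t''_kSr$. Since $S$ is a tree and $R$ is a rooted ray through $t''_k$, this path is exactly the initial segment of $R$ from $t''_k$ traversed backwards to~$r$. By the choice of $t''_k$ in the second diagonal, its exit satisfies $h(t''_k)\le t_0$. So walking back along $R$ from $t''_k$, the first vertex outside $\upcl{t''_k}$ is at or below $t_0$, and in particular lies outside $\upcl{t_0}$ unless it equals~$t_0$.

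I must now derive the contradiction, and this is the main obstacle: the exit may equal $t_0$ itself, and the part of $R$ between $t''_k$ and the exit may wander within $\upcl{t''_k}$. I would handle this by choosing $t''_k$ after $R$'s last visit to $\downcl{t_0}$, not merely after its last entry into $\upcl{t_0}$. Then the segment of $R$ before $t''_k$ contains a visit to $\downcl{t_0}$. Going backwards from $t''_k$ along $R$, the exit $h(t''_k)\in\downcl{t_0}$ is reached before leaving $\upcl{t''_k}$ elsewhere. It is the last vertex of $\downcl{t_0}$ on $R$ before $t''_k$, or at least some vertex of $\downcl{t_0}$ before $t''_k$.

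To make this airtight, I would instead fix $N$ so large that the tail of $R$ after its last vertex in the finite set $\downcl{t_0}\cap V(R)$ has begun. This set is finite, since $R$ meets the countable chain only at finitely many vertices below a fixed node, because $R$'s tail stays in $\upcl{t'_1}$ and $t_0<t'_1$. Then I choose $t''_k$ on this tail. The backwards path from $t''_k$ first leaves $\upcl{t''_k}$ at $h(t''_k)\le t_0$, so it reaches a vertex of $\downcl{t_0}$ after, in $R$-order, the last such vertex. This contradicts the choice of the tail. Hence no rooted ray of $S$ lies in~$\eps$.
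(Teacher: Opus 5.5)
Your concluding step does not work. The exit $h(t''_k)$ is itself a vertex of $R$ before $t''_k$. Since $h(t''_k)\le t_0$, it lies in the finite set $\downcl{t_0}\cap V(R)$, so it comes at or before the last vertex of that set. Choosing $t''_k$ after that last vertex is therefore perfectly consistent with the exit condition. Nothing you have shown prevents $R$, after its last vertex in $\downcl{t_0}$, from running inside $\upcl{t''_k}$ all the way to $t''_k$. In that case, walking back from $t''_k$ you stay in $\upcl{t''_k}$ until you reach exactly that last vertex, and it is the exit. So your claim that the backwards path ``reaches a vertex of $\downcl{t_0}$ after, in $R$-order, the last such vertex'' is a non sequitur. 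With a single limit on $R$ there is no contradiction: you need a vertex outside $\upcl{t''_k}$ on the tail before $t''_k$, to trap the exit inside the tail.

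The missing idea is to use two of the limits, as the paper does. You have essentially shown that infinitely many limits $t''_m$ lie on the tail $R_1$ of $R$ in $\upcl{t'_1}$. Listed in the order in which $R_1$ meets them, their indices cannot decrease forever, so $R_1$ meets some $t''_k$ before some $t''_{k'}$ with $k<k'$. As $t''_k<t''_{k'}$, we have $t''_k\notin\upcl{t''_{k'}}$. So the path $t''_{k'}Sr$, which runs back along $R$, leaves $\upcl{t''_{k'}}$ no later than at $t''_k$, and not at $t''_k$ itself, since $h(t''_{k'})\le t_0<t''_k$. Hence $h(t''_{k'})$ lies on $R$ strictly after $t''_k$, that is, on $R_1\subseteq\upcl{t'_1}$, contradicting $h(t''_{k'})\le t_0<t'_1$. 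Your last-entry construction would even supply such a pair directly: if a later last entry comes strictly after $w_n$ on $R$, then the limit just before it lies in $\upcl{t'_n}$, so its index is at least $n$.

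A smaller point: finiteness of $A^\star_{t_0}$ alone does not show that only finitely many last entries come from $A^\star_{t_0}$, because one entry vertex $w_n$ can serve for infinitely many~$n$. You need that the $w_n$ do not stabilise. This holds because otherwise a tail of $R$ would lie in $\bigcap_n\upcl{t'_n}$ and avoid $\hat\varrho$, contrary to \cref{Ends}(iii).
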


\begin{proof}
Suppose that a rooted ray $R$ of $S$ belongs to~$\eps$.
By~\eqref{TailCone}, $R$ has a tail $R_1$ in~$\upcl{t'_1}$.

We show that every tail of $R_1$ contains one of the limits~$t''_m$.
Let $R'$ be a tail of~$R_1$.
By \cref{Ends}(iii), $R'$ has a vertex $x\in\hat\varrho$, and $x<t'_{n+1}$ for some~$n$, so $x\notin\upcl{t'_{n+1}}$.
By~\eqref{TailCone}, the ray $xR'$ has a tail in $\upcl{t'_{n+1}}$, so it contains an edge $uy$ with $u\notin\upcl{t'_{n+1}}$ and $y\in\upcl{t'_{n+1}}$.
Then $u\in N(\upcl{t'_{n+1}})\subseteq\{t''_0,\dots,t''_n\}\cup A^\star_{t_0}$ by \cref{Bottleneck}, see \cref{fig_diagonal}, and as $u\in\upcl{t'_1}$ while $A^\star_{t_0}\subseteq\sdowncl{t_0}$, we have $u=t''_m$ for some~$m\le n$.

So $R_1$ contains infinitely many of the limits~$t''_m$; see \cref{fig_twopassages}.
Their indices, in the order in which $R_1$ meets them, cannot decrease forever; so $R_1$ meets some $t''_k$ before some $t''_{k'}$ with $k<k'$.
The path $t''_{k'}Sr$ is the initial segment of $R$ up to $t''_{k'}$, traversed backwards.
It contains the vertex $t''_k\notin\upcl{t''_{k'}}$, so its first vertex outside $\upcl{t''_{k'}}$, the exit $h(t''_{k'})$, comes no later than~$t''_k$; and $h(t''_{k'})\ne t''_k$, as $h(t''_{k'})\le t_0<t''_k$.
So $R$ meets $h(t''_{k'})$ after~$t''_k$, that is, on~$R_1$.
But $R_1\subseteq\upcl{t'_1}$, while $h(t''_{k'})\le t_0<t'_1$, a contradiction.
\end{proof}

\begin{figure}[h!]
\centering
\begin{tikzpicture}[
    dot/.style={circle,fill=blue!55!black,inner sep=0pt,minimum size=3.4pt},
    wdot/.style={circle,fill=green!45!black,inner sep=0pt,minimum size=4.5pt},
    lim/.style={circle,draw=black,fill=white,thin,inner sep=0pt,minimum size=5.5pt},
    rayR/.style={green!45!black,line width=1.2pt},
    exitE/.style={orange!90!black,semithick,densely dashed},
    yscale=0.7
  ]
\fill[red!6] (2.6,0) -- (12.6,3.5) -- (12.6,-3.5) -- cycle;
\draw[red!35] (12.6,3.5) -- (2.6,0) -- (12.6,-3.5);
\fill[red!14] (9.0,0) -- (12.6,2.16) -- (12.6,-2.16) -- cycle;
\draw[red!45] (12.6,2.16) -- (9.0,0) -- (12.6,-2.16);
\node[red!60!black] at (11.9,3.05) {$\upcl{t'_1}$};
\node[red!60!black] at (12.15,-1.25) {$\upcl{t''_{k'}}$};
\draw (0,0) -- (12.0,0);
\draw[->] (12.0,0) -- (12.8,0) node[right] {$\eps$};
\node[dot,label=below:{$r$}] (r) at (0,0) {};
\node[dot,label=below:{$t_0$}] (t0) at (1.8,0) {};
\node[dot,label={[xshift=-3pt]below:{$t'_1$}}] (t1) at (2.6,0) {};
\node[lim,label=below:{$t''_k$}] (lk) at (5.6,0) {};
\node[lim,label={[xshift=-6pt]below:{$t''_{k'}$}}] (lkk) at (9.0,0) {};
\draw[rayR] (r) .. controls (0.8,1.1) and (2.8,1.2) .. (4.2,0.42)
  .. controls (4.8,0.15) and (5.3,0.05) .. (lk);
% early visit to the cone above t''_{k'} (high loop)
\draw[rayR] (lk) .. controls (6.3,0.9) and (8.0,1.9) .. (10.4,2.1)
  .. controls (12.0,2.2) and (12.3,1.1) .. (11.6,0.95)
  .. controls (10.6,0.9) and (9.0,0.95) .. (7.8,0.8);
\node[wdot,label={[green!35!black,font=\small]left:{$h(t''_{k'})$}}] (w) at (7.8,0.8) {};
% last entry and arrival at t''_{k'}
\draw[rayR] (w) .. controls (8.4,0.45) and (9.6,0.4) .. (10.2,0.28)
  .. controls (10.6,0.2) and (9.9,0.04) .. (lkk);
\draw[rayR,->] (lkk) .. controls (9.8,-0.55) and (11.0,-0.75) .. (12.3,-0.7);
\node[green!35!black] at (0.8,1.15) {$R$};
\draw[exitE] (lkk) .. controls (7.0,-2.2) and (3.0,-2.2) .. (t0);
\node[orange!80!black,fill=white,inner sep=1pt] at (5.4,-1.75) {\small $h(t''_{k'})\le t_0$};
\end{tikzpicture}
\caption{If $R$ meets $t''_k$ before $t''_{k'}$, then the exit $h(t''_{k'})$ lies in~$\upcl{t'_1}$, not at or below~$t_0$.}
\label{fig_twopassages}
\end{figure}

\begin{proof}[Proof of \cref{GammaThm}(i) and~(ii)]
By \cref{GammaTGraph}, $\BG$ is a $T$-graph of finite adhesion, and in particular connected; it has $2^{\aleph_0}$ vertices, as noted after \cref{GammaEnds}.
Statement~(i) is part of \cref{GammaEnds}, and~(ii) follows from \cref{Missed}, as $S$ was an arbitrary spanning tree and $S$ contains a ray of $\eps$ only if it contains a rooted ray of~$\eps$.
\end{proof}

\section{Topological spanning trees}\label{sec_tst}

We recall the space $|G|$ of a graph $G$ from~\cite[Section~2]{DiestelKuhn}.
Its points are the vertices of~$G$, the inner points of its edges, and its ends; every edge $e=xy$ is a copy $[x,y]$ of the unit interval, with interior~$\mathring e$.
Without its ends, $|G|$ is the 1-complex~$G$, in which a vertex $u$ has as basic open neighbourhoods the unions of half-open intervals $[u,z_e)\subseteq e$ with $z_e\in\mathring e$, one for each edge $e$ at~$u$.
The topology \textsc{Top} on $|G|$ is generated by the open sets of this 1-complex together with the following sets $\hat C(X,\eps)$, for all ends $\eps$ and finite sets $X\subseteq V(G)$.
The set $\hat C(X,\eps)$ consists of the vertices of $C(X,\eps)$, the ends whose rays have tails in $C(X,\eps)$, the inner points of edges of $C(X,\eps)$, and a half-open interval $(z,y]$ with $z\in\mathring e$ of every edge $e=xy$ with $x\in X$ and $y\in C(X,\eps)$.
The space $|G|$ is Hausdorff~\cite[Section~2]{DiestelKuhn}.
An \emph{arc} in $|G|$ is a homeomorphic image of $[0,1]$, which we parametrise as an injective continuous map $A\colon[0,1]\to|G|$, and a \emph{circle} is a homeomorphic image of the unit circle.
We use the following facts.
\begin{enumerate}[(T1)]
\item\label{T1} If a path-connected subspace $Y$ of $|G|$ contains no circle, then it contains a unique arc between any two of its points: it contains a path, and hence an arc~\cite[Lemma~2.1]{DiestelKuhn}, and the union of two distinct arcs between the same points would contain a circle, as $|G|$ is Hausdorff; cf.~\cite[p.~839]{DiestelKuhn}.
\item\label{T2} For every edge $e=xy$, the interior $\mathring e$ is open in $|G|$, and its closure is $[x,y]$: a vertex $u\ne x,y$ has a neighbourhood disjoint from~$\mathring e$, and an end $\eps$ has the neighbourhood $\hat C(\{x,y\},\eps)$.
So if an arc $A$ meets $\mathring e$ but $A(0),A(1)\notin\mathring e$, then $A^{-1}(\mathring e)$ is an open interval $(a,b)$ with $\{A(a),A(b)\}=\{x,y\}$; we say that $A$ \emph{traverses}~$e$.
\item\label{T3} Every vertex $u$ has a neighbourhood containing no other vertex and no end, whose points other than $u$ form a disjoint union of open sets each contained in the interior of an edge at~$u$.
So if $A(\tau)=u$ for an arc~$A$, then for small $\delta>0$ the sets $A((\tau,\tau+\delta))$ and $A((\tau-\delta,\tau))$, where defined, each lie in the interior of a single edge at~$u$.
\end{enumerate}

A \emph{region} of a graph $G$ is a connected induced subgraph $C$ such that $N(C)$ is finite.
Then $C$ is a component of $G-N(C)$.
We write $\hat C$ for the set of the vertices of~$C$, the inner points of edges with an endvertex in~$C$, and the ends whose rays have tails in~$C$.
The following lemma is an analogue of the jumping arc lemma~\cite{PitzBook}, which is implicit in~\cite{DiestelKuhn}.

\begin{lemma}[Jumping arcs]\label{JumpingArc}
Let $C$ be a region of~$G$.
Then $\hat C$ is open in~$|G|$, and its closure is contained in $\hat C\cup N(C)$.
In particular, every connected subset of $|G|\setminus N(C)$ that meets $\hat C$ lies in~$\hat C$.
\end{lemma}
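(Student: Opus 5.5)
We show that $\hat C$ is open by covering each of its points with a basic open set contained in it, then show that every point outside $\hat C\cup N(C)$ has a neighbourhood disjoint from $\hat C$. The final assertion then follows by splitting a connected set into two disjoint open pieces.

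\emph{Openness.} We treat the three kinds of points of $\hat C$ separately.
\begin{enumerate}[(a)]
\item An inner point of an edge with an endvertex in $C$ has the open interval $\mathring e$ as a neighbourhood, and $\mathring e\subseteq\hat C$.
\item A vertex $u\in C$ has all its incident edges in $\hat C$, since each of them has the endvertex $u$ in $C$. So any basic neighbourhood of $u$ in the 1-complex lies in $\hat C$.
\item An end $\eps$ whose rays have tails in $C$ is handled with $X:=N(C)$, which is finite. As $C$ is a component of $G-X$, we have $C(X,\eps)=C$. Then $\hat C(X,\eps)$ consists of the vertices of $C$, the ends living in $C$, the inner points of edges of $C$, and half-edges $(z,y]$ of edges $xy$ with $x\in X$ and $y\in C$. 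All of these lie in $\hat C$, because each such edge has an endvertex in $C$.
\end{enumerate}

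\emph{Closure.} Let $p\notin\hat C\cup N(C)$; we find a neighbourhood of $p$ avoiding $\hat C$.
\begin{enumerate}[(a)]
\item If $p$ is an inner point of an edge, that edge has no endvertex in $C$, so $\mathring e$ is such a neighbourhood.
\item If $p$ is a vertex, then $p\notin C\cup N(C)$, so no edge at $p$ has an endvertex in $C$. Any basic neighbourhood of $p$ in the 1-complex therefore avoids $\hat C$.
\item If $p$ is an end $\eps'$ not living in $C$, its rays have tails in some other component $D=C(X,\eps')$ of $G-X$, again with $X=N(C)$. The set $\hat C(X,\eps')$ is open and contains $\eps'$. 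Its vertices lie in $D$, so they lie outside $C$. Its ends live in $D$; a ray with a tail in $D$ cannot also have a tail in $C$, so these ends are not in $\hat C$. Its edge interiors belong to edges of $D$, which have no endvertex in $C$. Its half-edges $(z,y]$ belong to edges $xy$ with $x\in X$ and $y\in D$, so neither endvertex is in $C$.
\end{enumerate}
Hence $\hat C(X,\eps')\cap\hat C=\emptyset$. This is the step I expect to need the most care, since it must check that no edge incident to $C$ can meet this neighbourhood; that holds because every edge leaving $C$ goes to $N(C)=X$.

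\emph{Last assertion.} Let $Y\subseteq|G|\setminus N(C)$ be connected and suppose it meets $\hat C$. By the closure statement, $\overline{\hat C}\setminus\hat C\subseteq N(C)$, which is disjoint from $Y$. So $Y\cap\hat C$ is relatively open in $Y$, since $\hat C$ is open in $|G|$. It is also relatively closed in $Y$, since $Y\cap\hat C=Y\cap\overline{\hat C}$. As $Y\cap\hat C$ is non-empty and $Y$ is connected, $Y\cap\hat C=Y$, that is, $Y\subseteq\hat C$.
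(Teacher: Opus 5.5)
Your proof is correct and follows the paper's argument essentially step for step. Both use 1-complex neighbourhoods for vertices and edge interiors, and $\hat C(N(C),\eps)$ for ends in both the openness and the closure parts. The final assertion is likewise a clopen argument on $Y$: you phrase it as $Y\cap\hat C$ being relatively closed via the closure bound, where the paper says $Y\setminus\hat C$ is relatively open.
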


\begin{proof}
Let $X:=N(C)$.
A vertex of $C$ has a neighbourhood as in~\ref{T3} that lies in~$\hat C$, as all edges at it have an endvertex in~$C$; an inner point of an edge lies in its interior, which is open by~\ref{T2}; and an end $\eps\in\hat C$ satisfies $C(X,\eps)=C$, so its neighbourhood $\hat C(X,\eps)$ lies in~$\hat C$.
Similarly, every point outside $\hat C\cup X$ has a neighbourhood that avoids~$\hat C$.
For a vertex $u\notin C\cup X$, take a neighbourhood as in~\ref{T3}: no edge at $u$ has an endvertex in~$C$.
For an inner point of an edge without an endvertex in~$C$, take the interior of that edge.
And for an end $\eps\notin\hat C$, the component $C(X,\eps)$ of $G-X$ is not~$C$, so $\hat C(X,\eps)$ avoids~$\hat C$.
Finally, if $Y\subseteq|G|\setminus X$ is connected, then $Y\cap\hat C$ and $Y\setminus\hat C$ are open in~$Y$, by what we have shown; so if $Y$ meets $\hat C$, it lies in~$\hat C$.
\end{proof}

For the rest of this section, $G$ is a $T$-graph of finite adhesion on an order tree $T$ with root~$r$; we apply the results to $\BG$ below and to $\G$ in \cref{sec_refinement}.
The ends of such $T$-graphs and their topology are well understood~\cite[Section~5]{KurkofkaPitz}, \cite[Section~2]{PitzRaySpaces}, and the lemmas below adapt this to the space~$|G|$; for completeness, we state and prove what we need.
The \emph{children} of a node $v$ are the nodes $s$ with $s^\star=v$.
Every node above $v$ lies in the up-closure of a unique child of~$v$, and no edge joins the up-closures of two distinct children, as their nodes are incomparable.
For every successor~$s$, the up-closure $\upcl s$ induces a region of~$G$, by~\ref{K1}, with $N(\upcl s)\subseteq\sdowncl s$ by~\ref{K3}; we write $\hat C_s$ for $\hat C$ with $C:=G[\upcl s]$.
So the vertices in $\hat C_s$ are those in~$\upcl s$, and by~\eqref{TailCone}, an end $\eps$ lies in $\hat C_s$ if and only if $s\in\varrho(\eps)$.

\begin{lemma}\label{ArcExit}
Let $v\ne r$ be a vertex, and let $A$ be an arc from $v$ to~$r$.
Then $A$ has a first vertex not in~$\upcl v$, and this vertex lies in~$\sdowncl v$.
\end{lemma}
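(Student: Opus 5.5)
The plan is to work with the set $A^{-1}(V(G)\setminus\upcl v)$ of parameters where $A$ is at a vertex outside $\upcl v$, and to show that it has a least element whose image lies in $\sdowncl v$. The tool is \cref{JumpingArc}, applied to regions $G[\upcl s]$ for successors~$s$ near~$v$. The argument splits according to whether $v$ is a successor or a limit.

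\emph{Case $v$ a successor.} Here $G[\upcl v]$ is a region with finite $N(\upcl v)\subseteq\sdowncl v$ by~\ref{K3}. Since $r\notin\upcl v$ and $A$ is connected, \cref{JumpingArc} shows that $A$ meets the finite set $N(\upcl v)$; let $\tau$ be the least such parameter. Then $A([0,\tau))$ is a connected subset of $|G|\setminus N(\upcl v)$ meeting $\hat C_v$ (it contains~$v$), so it lies in $\hat C_v$ by \cref{JumpingArc}. Every vertex it meets is therefore in $\upcl v$, and $A(\tau)\in\sdowncl v$ is the first vertex of $A$ outside $\upcl v$.

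\emph{Case $v$ a limit, $v\ne r$.} Let $\tau$ be the infimum of the parameters at which $A$ is at a vertex outside~$\upcl v$; this set is nonempty, as it contains~$1$. The task is to show that $A(\tau)$ is such a vertex. By construction $A([0,\tau))$ contains no vertex outside $\upcl v$. I will show that $A(\tau)$ is a vertex or an end, and rule out the other possibilities for it.
\begin{itemize}
\item $A(\tau)$ cannot be an inner point of an edge. If it were, continuity and \ref{T2} would force $A$ near $\tau$ to stay inside that edge's interior, contradicting the infimum.
\item $A(\tau)$ cannot be a vertex of~$\upcl v$. Such a vertex has a neighbourhood as in~\ref{T3} containing no other vertex, which again contradicts the infimum.
\item $A(\tau)$ cannot be an end $\eps$. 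Choose a successor $s\le v$ such that the vertices $A(\sigma)$ just after $\tau$ lie outside $\upcl s$ while earlier points lie in $\hat C_s$. The idea is to use the neighbourhoods $\hat C_s$ together with the fact that $\hat C_s$ is open with closure in $\hat C_s\cup N(\upcl s)$. If $\varrho(\eps)$ contains some successor $s\le v$, then $\hat C_s$ is a neighbourhood of~$\eps$. Points of $A$ just after $\tau$ would lie in $\hat C_s$ and hence be vertices of $\upcl s$. However, $\tau$ is a limit of parameters at vertices outside $\upcl v$, and these must approach~$\eps$. A cleaner route is to take $s$ a successor in $\varrho(\eps)$ that is not $\le v$, if $\eps$ is not an end "at"~$v$.
\end{itemize}

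So the real work is to handle an end as the limit point. Since $v$ is a limit, the ends in the closure of $\upcl v$ that are not in the open sets $\hat C_s$ for $s>v$ are exactly those with $\varrho(\eps)=\sdowncl v$, or with $v\in\varrho(\eps)$. Here $\upcl v$ has infinite neighbourhood $A^\star_v$, so it is not a region. I expect this step to be the main obstacle.

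My plan for it is to note that any end $\eps$ near $A([0,\tau))$ with $v\notin\varrho(\eps)$ is separated from $\upcl v$ by some successor $s\in\varrho(\eps)\setminus\downcl v$, or by $s\le v$ with $s\notin\varrho(\eps)$. In either case \cref{JumpingArc} for $\upcl s$ gives a neighbourhood of $\eps$ avoiding $\hat C$ of the relevant successor below~$v$. Concretely, I would show $\eps\in\hat C_s$ for every successor $s<v$ by induction, using that $A([0,\tau))$ lies in $\hat C_s$ for each successor $s\le v$ (by the jumping-arc lemma, as in the successor case). This gives $\sdowncl v\subseteq\varrho(\eps)$. An end with $\varrho(\eps)=\sdowncl v$ has $v$ as its top, and points of $A$ converging to it from inside $\upcl v$ force (via basic neighbourhoods $\hat C(X,\eps)$ with $X$ finite in $\sdowncl v$) a contradiction with the infinitely many edges from $v$ to $A^\star_v$. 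If instead $v\in\varrho(\eps)$, then $\eps$ lies in some $\hat C_{s'}$ with $s'$ a child of~$v$, and that open set contains only vertices of~$\upcl v$, contradicting the definition of~$\tau$.

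Finally, once $A(\tau)$ is known to be a vertex outside $\upcl v$, its membership in $\sdowncl v$ follows from the previous step. It lies in the closure of $A([0,\tau))\subseteq\bigcap_s\hat C_s$, so by \cref{JumpingArc} it lies in $N(\upcl s)\subseteq\sdowncl s\subseteq\sdowncl v$ for some successor $s\le v$ in~$\sdowncl v$, or it is adjacent to a vertex of $\upcl v$ and hence lies in $\sdowncl v$ by~\ref{K3}.
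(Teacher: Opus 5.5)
Your successor case is correct, and so is your handling of inner points of edges and of vertices of $\upcl v$ in the limit case. The gap is in the step you flag yourself: excluding that $A(\tau)$ is an end~$\eps$. You do get $A([0,\tau))\subseteq\hat C_s$ for every successor $s<v$, hence $\sdowncl v\subseteq\varrho(\eps)$, and the case $v\in\varrho(\eps)$ is fine.

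The trouble is the case $\varrho(\eps)=\sdowncl v$, where the contradiction you propose does not exist. Take any finite $X\subseteq\sdowncl v$. Then $v$ has a down-neighbour in $\sdowncl v$ above~$X$, so $v$ lies in $C(X,\eps)$, and by~\ref{K1} so does all of $\upcl v$. Hence the neighbourhoods $\hat C(X,\eps)$ you suggest contain every vertex that $A$ visits before~$\tau$. The infinitely many down-edges at $v$ are exactly what put $v$ into these neighbourhoods; they give no contradiction. (Your case split also misses that, in an order tree that is not Hausdorff, $\varrho(\eps)$ could pass through a top of $\sdowncl v$ other than~$v$.)

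The missing idea combines two facts. First, $A$ is injective, so it never returns to~$v$. Second, $\upcl v\setminus\{v\}$ is the disjoint union of the up-closures $\upcl s$ of the children $s$ of~$v$, and these are regions with $N(\upcl s)\subseteq\sdowncl s=\downcl v$.

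With these you could repair your argument. For small $\delta>0$ the set $A((\tau-\delta,\tau))$ avoids $\downcl v$. It also contains a vertex: otherwise it would lie in a single edge, whose closure does not contain~$\eps$. So by \cref{JumpingArc} it lies in one $\hat C_s$. Then $\eps\in\hat C_s$, so $s\in\varrho(\eps)$ and hence $v\in\varrho(\eps)$, the case you already handled. This also covers the non-Hausdorff case.

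Once you have this idea, the infimum is superfluous, and the paper's proof is the direct route. By~\ref{T3} and~\ref{T2}, $A$ begins by traversing an edge $vy$. If $y<v$, you are done. Otherwise $y\in\upcl s$ for a child $s$ of~$v$. Run your successor-case argument for the successor $s$ on the subarc from $y$ to~$r$. It gives a first vertex $w\in N(\upcl s)$, and all earlier vertices of $A$ lie in $\{v\}\cup\upcl s\subseteq\upcl v$. Injectivity gives $w\ne v$, so $w\in\sdowncl v$. This treats successors and limits uniformly.
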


\begin{proof}
By~\ref{T3} and~\ref{T2}, the arc $A$ begins by traversing an edge $vy$.
If $y<v$, then $y$ is the desired vertex.
Otherwise $y\in\upcl s$ for a child $s$ of~$v$.
The subarc of $A$ from $y$ to $r$ meets $\hat C_s$ but does not lie in it, as $r\notin\upcl s$; so it meets $N(\upcl s)$, by \cref{JumpingArc}, and we let $w$ be its first vertex in $N(\upcl s)$.
Before~$w$, this subarc lies in~$\hat C_s$, by \cref{JumpingArc}; so all vertices of $A$ before $w$ lie in $\{v\}\cup\upcl s\subseteq\upcl v$; see \cref{fig_endbelowlimit}, with $v=\hat u$.
And $w\ne v$, as $A$ is injective, so $w\in N(\upcl s)\setminus\{v\}\subseteq\sdowncl v$.
\end{proof}

\begin{lemma}\label{LastEntry}
Let $s$ be a successor, $\eps$ an end with $s\in\varrho(\eps)$, and $A$ an arc from a vertex not in $\upcl s$ to~$\eps$.
Then $A$ traverses an edge $xy$ from a vertex $x\in N(\upcl s)$ to a vertex $y\in\upcl s$, and all vertices of $A$ after $x$ lie in~$\upcl s$.
\end{lemma}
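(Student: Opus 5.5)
We apply \cref{JumpingArc} to the region $C:=G[\upcl s]$, whose neighbourhood $X:=N(\upcl s)$ is finite and lies in $\sdowncl s$. Parametrise $A\colon[0,1]\to|G|$ with $A(0)$ a vertex outside $\upcl s$ and $A(1)=\eps$. Since $s\in\varrho(\eps)$, the end $\eps$ lies in $\hat C_s$, which is open by \cref{JumpingArc}.

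First we locate the last time $A$ meets $X$. The set $A^{-1}(X)$ is closed, being the preimage of a finite set of points in the Hausdorff space $|G|$. It is also non-empty. To see this, note that $A$ meets $\hat C_s$ at its endpoint but starts at $A(0)\notin\hat C_s\cup X$: the vertices in $\hat C_s$ are exactly those of $\upcl s$, and $A(0)\notin\upcl s$, while if $A(0)\in X$ we are already done. So if $A$ avoided $X$, then $A$ would be a connected subset of $|G|\setminus X$ meeting $\hat C_s$, and \cref{JumpingArc} would force $A\subseteq\hat C_s$, a contradiction. Let $\tau:=\max A^{-1}(X)$ and $x:=A(\tau)\in X$; here $\tau<1$ because $\eps\notin X$.

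Next we show that $A$ stays in $\hat C_s$ after~$x$. The set $A((\tau,1])$ is connected, avoids $X$, and contains $\eps\in\hat C_s$, so it lies in $\hat C_s$ by \cref{JumpingArc}. In particular every vertex of $A$ after $x$ is a vertex in $\hat C_s$, and hence lies in $\upcl s$.

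Finally we find the edge $xy$. By~\ref{T3}, for small $\delta>0$ the set $A((\tau,\tau+\delta))$ lies in the interior of a single edge $e=xy$ at~$x$. These points lie in $\hat C_s$, and an inner point of an edge belongs to $\hat C_s$ only if that edge has an endvertex in~$C$. Since $x\in X$ is not in $\upcl s$, this forces $y\in\upcl s$. It remains to check that $A$ actually traverses~$e$, i.e. reaches $y$ after passing through~$\mathring e$. The set $A^{-1}(\mathring e)$ is open by~\ref{T2}, and $A(1)=\eps\notin\mathring e$. Let $(\tau,b)$ be the component of $A^{-1}(\mathring e)$ beginning at~$\tau$. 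Then $A(b)$ lies in the closure $[x,y]$ of $\mathring e$ but not in $\mathring e$, so $A(b)\in\{x,y\}$. Injectivity of $A$ rules out $A(b)=x$, hence $A(b)=y$. Thus $A$ traverses $xy$ from $x\in N(\upcl s)$ to $y\in\upcl s$, with all later vertices in $\upcl s$, as claimed.

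The main obstacle is this last step. The subtle point is making sure the right-neighbourhood given by~\ref{T3} really runs along an edge into $\upcl s$ and reaches~$y$, rather than the arc converging straight into an end; the closure description in~\ref{T2} together with injectivity handles this.
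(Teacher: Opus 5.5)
Your proof is correct and follows the paper's argument: apply \cref{JumpingArc} to $G[\upcl s]$ to force a visit to $N(\upcl s)$, take the last such vertex $x$, apply the lemma again to keep the rest of $A$ in $\hat C_s$, and read off the edge $xy$ from~\ref{T3} and~\ref{T2}. You just spell out in more detail than the paper why the last visit exists and why $A$ actually reaches~$y$.
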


\begin{proof}
The arc $A$ ends in $\hat C_s$ but does not lie in it.
So it meets the finite set $N(\upcl s)$, by \cref{JumpingArc}, and we let $x$ be its last vertex in this set, which exists as $A$ is injective.
After~$x$, the arc lies in~$\hat C_s$, by \cref{JumpingArc}; so its vertices after $x$ lie in~$\upcl s$.
By~\ref{T3} and~\ref{T2}, the arc $A$ traverses an edge $xy$ right after~$x$, whose interior lies in~$\hat C_s$; so $y\in\upcl s$.
\end{proof}

\begin{proof}[Proof of \cref{GammaThm}(iii)]
Let $Z\subseteq|\BG|$ be a path-connected subspace that contains all vertices and ends of~$\BG$, and suppose that $Z$ contains no circle.
By~\ref{T1}, for every vertex $v\ne r$ the space $Z$ contains a unique arc from $v$ to~$r$, which we denote by $vZr$, as for the paths $vSr$ in \cref{sec_proof}.
Let $h(v)$ be the first vertex of $vZr$ not in~$\upcl v$; then $h(v)<v$ by \cref{ArcExit}.

We now go through \cref{sec_proof} with this map $h$ in place of the exits of the tree~$S$.
The proof of \cref{FirstDiagonal} uses only that $h(v)$ is defined for every limit~$v$ of~$\Tc$ and that $h(v)<v$.
The second diagonal involves $h$ only through the choice of $t_0$ in \cref{FirstDiagonal}, and \cref{Bookkeeping,Bottleneck} do not involve $S$ or $h$ at all.
So we obtain a node $t_0$ as in \cref{FirstDiagonal} for our map~$h$, and with it the nodes $t''_n$ and $t'_n$, the high-ray $\hat\varrho$ and its end $\eps$ as in \cref{sec_proof}.

As $Z$ is path-connected, it contains an arc $A$ from $r$ to~$\eps$.
We follow the proof of \cref{Missed}.
Where that proof uses \cref{Ends}(iii), we show instead that the vertices of a final segment of $A$ cannot all lie above a top of~$\hat\varrho$; this argument goes back to our unpublished notes from 2023, where it appears in a sketch showing that no regressive spanning tree of~$\RG$ induces a topological spanning tree.
Let $x_1y_1$ be the edge given by \cref{LastEntry} for the successor $t'_1\in\hat\varrho=\varrho(\eps)$, and let $\tau_1$ be the parameter of~$y_1$; so all vertices of $A$ from $\tau_1$ on lie in~$\upcl{t'_1}$.

\begin{figure}[h!]
\centering
\begin{tikzpicture}[
    dot/.style={circle,fill=blue!55!black,inner sep=0pt,minimum size=3.2pt},
    lim/.style={circle,draw=black,fill=white,thin,inner sep=0pt,minimum size=5.5pt},
    endp/.style={diamond,fill=blue!70!black,inner sep=0pt,minimum size=5pt},
    downE/.style={orange!90!black,semithick,densely dashed},
    yscale=0.85
  ]
\foreach \i/\y in {1/0,2/0.9,3/1.6,4/2.15,5/2.55,6/2.85,7/3.07,8/3.23}
  \node[dot] (c\i) at (0,\y) {};
\draw (0,0) -- (0,3.23);
\draw[densely dotted] (0,3.23) -- (0,3.45);
\node[endp,label=left:{$\eps$}] (e) at (0,3.6) {};
\node[lim,label={[yshift=-1pt]below right:{$\hat u$}}] (v) at (0,4.3) {};
\foreach \i/\x in {1/-3.6,2/-1.2,3/1.2,4/3.6}{
  \fill[blue!6] (\x,5.1) -- (\x-0.75,7.3) -- (\x+0.75,7.3) -- cycle;
  \draw[blue!35] (\x-0.75,7.3) -- (\x,5.1) -- (\x+0.75,7.3);
  \node[dot,label={[xshift=-1pt]below:{\scriptsize $s_\i$}}] (s\i) at (\x,5.1) {};
  \draw (v) -- (s\i);
}
\node at (4.8,6.2) {$\cdots$};
\node[dot] (a1) at (-3.95,6.4) {};
\node[dot] (a2) at (-1.5,6.5) {};
\node[dot] (a3) at (1.5,6.5) {};
\node[dot] (a4) at (3.95,6.4) {};
\draw[downE] (a1) .. controls (-5.2,5.0) and (-2.0,1.0) .. (c2);
\draw[downE] (a2) .. controls (-2.6,5.2) and (-1.0,2.6) .. (c4);
\draw[downE] (a3) .. controls (2.6,5.2) and (1.0,3.1) .. (c6);
\draw[downE] (a4) .. controls (5.2,5.0) and (1.6,3.4) .. (c8);
\draw[green!45!black,line width=1.1pt]
  (-1.25,5.75) .. controls (-0.7,6.1) and (-1.7,6.4) .. (-1.3,6.75) .. controls (-1.0,7.0) and (-0.75,6.6) .. (-0.85,7.15);
\node[dot,fill=green!35!black,label={[green!35!black]left:{\scriptsize $z$}}] at (-1.25,5.75) {};
\node[green!35!black] at (-0.4,6.75) {$A$};
\end{tikzpicture}
\caption{An arc can leave the up-closure of a child~$s$ of a limit~$\hat u$ only through $N(\upcl s)\subseteq\downcl{\hat u}$.}
\label{fig_endbelowlimit}
\end{figure}

\emph{For every $\tau\ge\tau_1$ such that $A(\tau)$ is a vertex, $A$ meets one of the limits~$t''_m$ at or after~$\tau$.}
First, some vertex of $A$ at or after $\tau$ lies outside $\bigcap_{n\ge1}\upcl{t'_n}$.
Otherwise this intersection contains the vertex $A(\tau)$, which lies above all of~$\hat\varrho$; so the least element of $\downcl{A(\tau)}\setminus\hat\varrho$ is a top of~$\hat\varrho$, which can only be the sequence $\hat u:=\bigcup_n t'_n$, and the intersection is~$\upcl{\hat u}$.
If $A$ visits $\hat u$ at or after~$\tau$, let $z$ be the vertex at the other end of the edge that $A$ traverses right after~$\hat u$, by~\ref{T3} and~\ref{T2}; otherwise let $z:=A(\tau)$.
Then the subarc $zA\eps$ of $A$ avoids~$\hat u$, and all its vertices lie in $\upcl{\hat u}$; in particular $z\in\upcl s$ for a child $s$ of~$\hat u$.
But $s\notin\hat\varrho=\varrho(\eps)$, so $\eps\notin\hat C_s$, and \cref{JumpingArc} gives a vertex of $zA\eps$ in $N(\upcl s)\subseteq\downcl{\hat u}$, a contradiction; see \cref{fig_endbelowlimit}.
Now let $x$ be a vertex of $A$ at or after $\tau$ outside $\bigcap_{n\ge1}\upcl{t'_n}$, with $x\notin\upcl{t'_{n+1}}$, say.
As $t'_{n+1}\in\varrho(\eps)$, we have $\eps\in\hat C_{t'_{n+1}}$, so \cref{JumpingArc} gives a vertex $u$ of $xA\eps$ in $N(\upcl{t'_{n+1}})$.
Then $u\in\{t''_0,\dots,t''_n\}\cup A^\star_{t_0}$ by \cref{Bottleneck}, see \cref{fig_diagonal}, and as $u$ is a vertex of $A$ at or after $\tau_1$, it lies in $\upcl{t'_1}$, which is disjoint from~$A^\star_{t_0}$; so $u=t''_m$ for some~$m\le n$.

Applying this repeatedly, starting with $\tau=\tau_1$ and continuing each time at a vertex of $A$ after the limit just found, which exists by~\ref{T3} and~\ref{T2}, we obtain limits $t''_{m_1},t''_{m_2},\dots$ that $A$ meets at strictly increasing parameters at or after~$\tau_1$; in particular, they are distinct.
The indices $m_1,m_2,\dots$ cannot decrease forever; so $A$ meets some $t''_k$ before some $t''_{k'}$ with $k<k'$, both at or after~$\tau_1$; see \cref{fig_twopassages}, with $A$ in place of~$R$.
By~\ref{T1}, the arc $t''_{k'}Zr$ is the initial segment of $A$ up to $t''_{k'}$, traversed backwards.
It contains $t''_k\notin\upcl{t''_{k'}}$, so its first vertex outside $\upcl{t''_{k'}}$, which is $h(t''_{k'})$, comes no later than~$t''_k$; and $h(t''_{k'})\ne t''_k$, as $h(t''_{k'})\le t_0<t''_k$.
So $A$ meets $h(t''_{k'})$ after $t''_k$, and hence after~$\tau_1$.
But all vertices of $A$ after $\tau_1$ lie in $\upcl{t'_1}$, while $h(t''_{k'})\le t_0<t'_1$, a contradiction.
%This completes the proof of \cref{GammaThm}(iii).
\end{proof}

\section{Thin ends}\label{sec_refinement}

We now give the formal definition of $\G$ from the introduction.

\subsection*{Sequences with bits}
Let $S^*$ be the set of all sequences $t\colon\alpha\to\N\times\{0,1\}$ with $\alpha<\omega_1$ whose first coordinates, the \emph{values} of~$t$, are pairwise distinct; the second coordinates are the \emph{bits} of~$t$.
Ordered by extension, $S^*$ is an order tree, and forgetting the bits maps it onto~$T^*$.
We transfer the notation of \cref{sec_Gamma} via the values: $\im(t)$ is the set of values of~$t$, and $\last(s)$, $s^\star$, $\Sigma(S^*)$, co-infinite nodes and the sets $A_t$ and $A^\star_t$ of records are defined as there; \cref{Records} and~$(\star)$ hold for $S^*$ with the same proofs.
For $s\in\Sigma(S^*)$ we write $b(s)$ for the bit of its last entry, and $t\conc(n,b)$ is $t$ followed by the entry $(n,b)$.
Every high-ray $\varrho$ of $S^*$ has at most one top, the sequence~$\bigcup\varrho$.
A record $x$ of a node $t$ is a \emph{$0$-record} if $b(x)=0$, and a limit $\mu$ other than the root is \emph{open} if it has infinitely many $0$-records, and \emph{closed} otherwise.
By \cref{Records}(iii), the $0$-records of an open limit $\mu$ are cofinal in~$\sdowncl\mu$.

\subsection*{The tree \texorpdfstring{$\Tb$}{T-flat}}
Let $\Tb$ consist of all co-infinite $x\in S^*$ all of whose limit initial segments other than the root are open; this includes $x$ itself if $x$ is a limit.
Then $\Tb$ is a down-closed subtree of $S^*$, and every high-ray of $\Tb$ has at most one top; so $\Tb$ is Hausdorff.
We put $\Sigma(\Tb):=\Sigma(S^*)\cap\Tb$.
The tree $\Tb$ contains every co-infinite sequence all of whose bits are $0$, as every record is then a $0$-record; so $\Tb$ contains a copy of the tree $\Tc$ of \cref{sec_Gamma}, and in particular it has height~$\omega_1$.

\subsection*{The graph \texorpdfstring{$\G$}{G-flat}}
The graph $\G$ has vertex set $\Tb$ and the following edges:
\begin{enumerate}[(E1)]
\item\label{E1} $q^\star q$ for every $q\in\Sigma(\Tb)$;
\item\label{E2} $tx^\star$ for every node $t\in\Tb$ other than the root and every $0$-record $x$ of~$t$.
\end{enumerate}
So $\G$ is defined like $\BG$, except that only the records with bit~$0$ count, and the edges~\ref{E1} also join the successors with bit~$1$ to their predecessors.
A successor $s$ with $b(s)=1$ is a record of many nodes above it, but none of them is joined to $s^\star$ on account of~$s$; this invisibility is what we shall exploit.
It costs us some limits: a limit with only finitely many $0$-records has too few down-neighbours, and we have removed it, together with everything above it, by passing to~$\Tb$.

\begin{lemma}\label{TGraph}
The graph $\G$ is a $T$-graph of finite adhesion on~$\Tb$.
More precisely, for every $q\in\Sigma(\Tb)$, every vertex in $N(\upcl q)$ other than $q^\star$ is the immediate predecessor of a $0$-record of~$q$, and so $|N(\upcl{q})|\le\last(q)+1$.
\end{lemma}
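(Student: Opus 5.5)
The plan follows the proof of \cref{GammaTGraph}, with two extra points. The bit-$1$ successors are now joined to their predecessors only through~\ref{E1}, and only $0$-records contribute edges of type~\ref{E2}.

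First I check that $\G$ is a $T$-graph. Every edge joins comparable nodes: in~\ref{E1} we have $q^\star<q$, and in~\ref{E2} we have $x^\star<x\le t$. Every successor $q\in\Sigma(\Tb)$ is adjacent to $q^\star$ by~\ref{E1}, which is cofinal in $\sdowncl q$. Now let $\mu\ne r$ be a limit of~$\Tb$. By definition of $\Tb$, the node $\mu$ is open, so it has infinitely many $0$-records. By \cref{Records}(iii), only finitely many records lie below any given $z<\mu$, so these $0$-records are cofinal in $\sdowncl\mu$. Their immediate predecessors are then cofinal too, since $x^\star<x$ and $x^\star\ge z$ once $x>z$ is a successor above $z$. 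These predecessors lie in $\Tb$ because $\Tb$ is down-closed, and they are neighbours of $\mu$ by~\ref{E2}. The root needs no down-neighbours.

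Next I bound the adhesion. Let $q\in\Sigma(\Tb)$, and let $u\in\upcl q$ be adjacent to some $x\notin\upcl q$. By~\ref{K3} we get $x<q$, and in particular $x<u$, so the edge $ux$ is either of type~\ref{E1} or of type~\ref{E2} with $u$ as the upper end. In the case of~\ref{E1}, we have $u\in\Sigma(\Tb)$ and $x=u^\star$. Since $u^\star<q\le u$, this forces $u=q$ and $x=q^\star$. In the case of~\ref{E2}, we have $x=y^\star$ for a $0$-record $y$ of $u$. As $y^\star<q$ and $y$ is a successor comparable with $q$, we get $y\le q$, so $y\in A_u\cap\downcl q$. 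If $y=q$ then $x=q^\star$. Otherwise $y\in A_u\cap\sdowncl q\subseteq A_q$ by~$(\star)$. Being a record is decided by values alone, while the bit $b(y)=0$ is intrinsic to $y$, so $y$ is a $0$-record of~$q$. Hence every vertex of $N(\upcl q)$ other than $q^\star$ lies in $\{y^\star: y\text{ a $0$-record of }q\}$.

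Finally, by \cref{Records}(ii) (which transfers to $S^*$), $q$ has at most $\last(q)$ records. So the above set has at most $\last(q)$ elements, and adding $q^\star$ gives $|N(\upcl q)|\le\last(q)+1$, which is finite. I expect no real obstacle here. The only point requiring care is the case analysis of the edge $ux$, in particular noticing that an~\ref{E1}-edge leaving $\upcl q$ must be exactly $qq^\star$. This is where the ``$+1$'' comes from, since $q$ need not be a $0$-record of itself when $b(q)=1$.
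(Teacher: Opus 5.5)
Your proof is correct and follows the paper's argument essentially step for step. You check the $T$-graph property via the cofinal $0$-records of open limits, and you split on the type of the edge $ux$ leaving $\upcl q$, where an~\ref{E1}-edge forces $u=q$ and an~\ref{E2}-edge yields a $0$-record of $q$ by~$(\star)$; \cref{Records}(ii) then gives the bound $\last(q)+1$.
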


\begin{proof}
As in the proof of \cref{GammaTGraph}.
The ends of every edge are comparable.
Every $q\in\Sigma(\Tb)$ is adjacent to $q^\star$, and every limit $\mu\ne\emptyset$ of $\Tb$ is open, so its $0$-records, and hence their immediate predecessors, are cofinal in $\sdowncl{\mu}$ by \cref{Records}(iii).
So $\G$ is a $T$-graph.
Now let $q\in\Sigma(\Tb)$, and let $u\in\upcl q$ be adjacent to $x\notin\upcl q$; then $x<q$ by~\ref{K3}, and $x<u$.
If the edge $xu$ has type~\ref{E1}, then $x=u^\star$, which forces $u=q$ and $x=q^\star$.
Otherwise $x=y^\star$ for a $0$-record $y$ of~$u$; as $y^\star<q$, we have $y\le q$, so $y$ is a $0$-record of~$q$, by~$(\star)$ if $y<q$.
By \cref{Records}(ii), $q$ has at most $\last(q)$ records.
\end{proof}

As $\Tb$ is Hausdorff, every end of $\G$ has countable combined degree, by the remark after \cref{Dominators}.

\subsection*{Proof of \texorpdfstring{\cref{MainThm}}{Theorem 1.7}}
Throughout this subsection, $S$ is a spanning tree of~$\G$, which we regard as rooted at the root $r$ of~$\Tb$; we define the exits $h(v)$ of the vertices $v\ne r$ as in \cref{sec_proof}, and \cref{Exit} holds with the same proof.
We find an end of $\G$ of degree~$1$ that is dominated by at least one but only finitely many vertices, and missed by~$S$.
We now run \cref{sec_proof} in~$\Tb$, with the following changes.

\emph{Extensions.}
For $t\in\Sigma(\Tb)$, an \emph{extension} of~$t$ is a node $t'\ge t$ of~$\Tb$ all of whose values after~$t$ exceed~$\last(t)$, and it is a \emph{$1$-extension} of~$t$ \emph{skipping}~$a_1$ if moreover $t'>t$ and all these values exceed~$a_1$, the least value above $\last(t)$ that does not occur in~$t$.
The bits play no role here.

\emph{The first diagonal.}
The claim now reads: every $t\in\Sigma(\Tb)$ has an extension $t'\in\Sigma(\Tb)$ such that every $1$-extension $t''$ of $t'$ that is a limit of~$\Tb$ satisfies $h(t'')>t$.
As in \cref{FirstDiagonal}, the claim is false: there exists $t_0\in\Sigma(\Tb)$ such that every extension $t'\in\Sigma(\Tb)$ of $t_0$ has a $1$-extension $t''$ that is a limit of~$\Tb$ with $h(t'')\le t_0$.
The proof of \cref{FirstDiagonal} goes through, starting with an arbitrary $s_0\in\Sigma(\Tb)$, with $s_{n+1}:=s'_n\conc(a_{2n+2},0)$, which lies in $\Tb$ as its limit initial segments are those of~$s'_n$.
The only new point is that the limit $\hat s=\bigcup_n s_n$ lies in~$\Tb$; it is co-infinite as in the proof of \cref{FirstDiagonal}.
By the proof, the values of $\hat s$ after $s'_n$ are at least $a_{2n+2}=\last(s_{n+1})$, so every $s_{n+1}$ is a $0$-record of $\hat s$, and $\hat s$ is open.
Its other limit initial segments are initial segments of some $s_n\in\Tb$, so they are open too, and $\hat s$ is a limit of~$\Tb$.

\begin{figure}[h!]
\centering
\begin{tikzpicture}[
    dot/.style={circle,fill=blue!55!black,inner sep=0pt,minimum size=3.4pt},
    zero/.style={circle,draw=blue!55!black,fill=blue!55!black,inner sep=0pt,minimum size=5pt},
    bnd/.style={circle,draw=red!80!black,fill=white,thick,inner sep=0pt,minimum size=5pt},
    lim/.style={circle,draw=black,fill=white,thin,inner sep=0pt,minimum size=5pt},
    edgeE/.style={orange!90!black,semithick},
    noE/.style={gray!70,densely dotted,thick},
    yscale=0.9
  ]
\draw (-1.2,0) -- (0,0);
\node[lim,label=below:{$t''_n$}] (x) at (0,0) {};
% bit 0 branch (up)
\draw (x) -- (0.6,0.7) -- (4.2,0.7);
\node[zero,label={[yshift=1pt]above left:{$t''_n\conc(a_{n+1},0)$}}] (z) at (0.6,0.7) {};
\foreach \u in {1.2,1.7,2.2,2.7,3.2} \node[dot] at (\u,0.7) {};
\foreach \yy in {0.7,-0.7}{\fill[white] (3.55,\yy-0.1) rectangle (3.85,\yy+0.1);
  \draw ($(3.7,\yy)+(-0.10,-0.14)$) -- ($(3.7,\yy)+(0.02,0.14)$);
  \draw ($(3.7,\yy)+(-0.02,-0.14)$) -- ($(3.7,\yy)+(0.10,0.14)$);}
\node[lim,label=right:{$\mu$}] (m0) at (4.2,0.7) {};
\draw[edgeE] (x) .. controls (0.4,1.9) and (3.4,1.7) .. (m0);
% bit 1 branch (down)
\draw (x) -- (0.6,-0.7) -- (4.2,-0.7);
\node[bnd,label={[yshift=-1pt]below left:{$t''_n\conc(a_{n+1},1)$}}] (o) at (0.6,-0.7) {};
\foreach \u in {1.2,1.7,2.2,2.7,3.2} \node[dot] at (\u,-0.7) {};
\node[lim,label=right:{$\mu'$}] (m1) at (4.2,-0.7) {};
\draw[noE] (x) .. controls (0.4,-1.9) and (3.4,-1.7) .. (m1);
\node[gray!80!black] at (2.2,-1.5) {$\times$};
\end{tikzpicture}
\caption{The bit of a new node decides whether the later limits are joined to its predecessor.}
\label{fig_bitchoice}
\end{figure}

\emph{The second diagonal.}
Fix $t_0$ as above, and let $t'_0:=t_0\conc(a_0,0)$, where $a_0$ is the least value above $\last(t_0)$ that does not occur in~$t_0$; the node $t'_0$ is an extension of $t_0$ in $\Sigma(\Tb)$.
Recursively, for $n\ge0$, let $t''_n$ be a $1$-extension of $t'_n$ skipping~$a_{n+1}$ that is a limit of~$\Tb$ with $h(t''_n)\le t_0$, and append the skipped value with bit~$1$ (see \cref{fig_bitchoice}):
\[
t'_{n+1}:=t''_n\conc(a_{n+1},1).
\]
The boundaries $t'_1<t'_2<\cdots$ and the stages are defined as in \cref{sec_proof}, and the analogue of \cref{Bookkeeping} holds: the recursion is well defined, and for every $n\ge0$, (i)~every node $y$ in the $n$th stage satisfies $\last(y)>a_{n+1}$; (ii)~$\last(t'_0)<a_{n+1}<a_{n+2}$; (iii)~$t'_{n+1}\in\Sigma(\Tb)$ is an extension of~$t_0$ with $\last(t'_{n+1})=a_{n+1}$.
The proof is that of \cref{Bookkeeping}, with the induction hypothesis $\last(t'_n)\ge\last(t'_0)$; in~(iii), the node $t'_{n+1}$ lies in $\Tb$, as its limit initial segments are those of~$t''_n$, and it is an extension of $t_0$, as its values after $t_0$ are $\last(t'_0)=a_0>\last(t_0)$ and values exceeding~$\last(t'_0)$.
Let $\hat\varrho:=\bigcup_{n\ge1}\downcl{t'_n}$, a high-ray of $\Tb$ containing $t'_0$, all $t''_n$ and all boundaries, and let $\eps:=\eps_{\hat\varrho}$ be its end.

The following finite set of nodes at or below $t'_0$ will determine everything through which $S$ could reach $\eps$ from below:
let $M$ be the set of $0$-records of~$t'_0$.
Note that $t'_0\in M$, and that $M$ has at most $\last(t'_0)$ elements by \cref{Records}(ii).
We write $M^\star:=\{x^\star:x\in M\}$; as all elements of $M$ are at most $t'_0$, and $(t'_0)^\star=t_0$, all elements of $M^\star$ are at most~$t_0$.

The decisive property of the boundaries is that they kill all records in the stages before them, while being invisible themselves.

\begin{lemma}
\label{RecordKilling}
Let $\mu\in\Tb$, and let $x$ be a $0$-record of $\mu$ with $x<t'_{n+1}\le\mu$ for some $n\ge0$.
Then $x\in M$.
\end{lemma}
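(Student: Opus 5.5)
The plan is a case split on where $x$ lies relative to $t'_0$ along the chain $\sdowncl{\mu}$, using that records pass down to initial segments by~$(\star)$ and that the boundary values are small.

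\emph{Case $x\le t'_0$.} Then $x<t'_{n+1}\le\mu$, and $t'_0<t'_{n+1}$, so $(\star)$ applied to $t'_0\le\mu$ (or directly, if $x=t'_0$, since $t'_0$ is a record of itself) shows that $x$ is a record of~$t'_0$. Its bit is unchanged, so $x$ is a $0$-record of $t'_0$, that is, $x\in M$.

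\emph{Case $x>t'_0$.} We derive a contradiction. Here $x$ is a successor in the interval $(t'_0,t'_{n+1})$, so either $x$ is a boundary $t'_{k+1}$ with $k<n$, or $x$ lies in the $k$th stage for some $k\le n$.
\begin{itemize}
\item If $x=t'_{k+1}$ is a boundary, then $b(x)=1$ by construction, so $x$ is not a $0$-record.
\item If $x$ lies in the $k$th stage, then the analogue of \cref{Bookkeeping}(i) gives $\last(x)>a_{k+1}=\last(t'_{k+1})$, while $x<t'_{k+1}\le t'_{n+1}\le\mu$. Since $t'_{k+1}$ is a successor with $x<t'_{k+1}\le\mu$ and a smaller last value, $x$ is not a record of $\mu$, contradicting the hypothesis.
\end{itemize}

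The main point to check is the case distinction itself: that every successor strictly between $t'_0$ and $t'_{n+1}$ is either a boundary or lies in some stage. This holds because the stages fill the intervals $(t'_k,t''_k)$, and the only successors of $(t''_k,t'_{k+1}]$ are the boundaries $t'_{k+1}$, since $t'_{k+1}=t''_k\conc(a_{k+1},1)$ is an immediate successor of~$t''_k$. The limits $t''_k$ are not successors and so cannot be records. With this in place, the argument mirrors \cref{Bottleneck}.
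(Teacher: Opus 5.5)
Your proof is correct and follows the paper's argument essentially step for step. It uses the same case split at $t'_0$, rules out boundaries by their bit~$1$ and stage nodes by $\last(x)>a_{k+1}=\last(t'_{k+1})$, and appeals to~$(\star)$ to conclude $x\in M$. Your check that every successor in $(t'_0,t'_{n+1})$ is a boundary or a stage node, because $t'_{k+1}$ is an immediate successor of the limit $t''_k$, is a detail the paper leaves implicit.
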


\begin{proof}
Suppose first that $x>t'_0$.
As $x$ is a node of $\Sigma(\Tb)$ below $t'_{n+1}$, either $x$ is a boundary $t'_{k+1}$ with $k<n$, or $x$ lies in the $k$th stage for some $k\le n$.
A boundary has bit~$1$ and is not a $0$-record.
If $x$ lies in the $k$th stage, then $\last(x)>a_{k+1}=\last(t'_{k+1})$ by the analogue of \cref{Bookkeeping}(i), and $x<t'_{k+1}\le\mu$, so $x$ is not a record of~$\mu$.
Hence $x\le t'_0$, and so $x$ is a $0$-record of~$t'_0$, by~$(\star)$ if $x<t'_0$.
So $x\in M$.
\end{proof}

\begin{figure}[h!]
\centering
\begin{tikzpicture}[
    dot/.style={circle,fill=blue!55!black,inner sep=0pt,minimum size=3.4pt},
    bnd/.style={circle,draw=red!80!black,fill=white,thick,inner sep=0pt,minimum size=5pt},
    lim/.style={circle,draw=black,fill=white,thin,inner sep=0pt,minimum size=5pt},
    sq/.style={rectangle,fill=blue!55!black,inner sep=0pt,minimum size=3.6pt},
    edgeE/.style={orange!90!black,semithick},
    hasseE/.style={orange!90!black,line width=1.6pt},
    exitE/.style={orange!90!black,semithick,densely dashed},
    noE/.style={gray!70,densely dotted,thick},
  ]
% clip to the drawing: the control points of the curves would enlarge the bounding box
\clip (-0.25,-3.1) rectangle (13.5,2.5);
\fill[red!7] (7.85,0) -- (13.2,2.0) -- (13.2,-2.0) -- cycle;
\draw[red!35] (13.2,2.0) -- (7.85,0) -- (13.2,-2.0);
\node[red!60!black] at (12.7,1.2) {$\upcl{t'_2}$};
\draw (0,0) -- (3.55,0);
\draw (3.85,0) -- (6.85,0);
\draw (7.15,0) -- (10.15,0);
\draw[->] (10.45,0) -- (13.0,0) node[right] {$\eps$};
\foreach \bx in {3.7,7.0,10.3}{
  \draw ($(\bx,0)+(-0.10,-0.14)$) -- ($(\bx,0)+(0.02,0.14)$);
  \draw ($(\bx,0)+(-0.02,-0.14)$) -- ($(\bx,0)+(0.10,0.14)$);
}
\node[dot,label=below:{$r$}] at (0,0) {};
\node[sq] (M1) at (0.55,0) {};
\node[sq] (M2) at (1.0,0) {};
\node[sq,label={[yshift=-1pt]below:{$t_0$}}] (t0) at (1.45,0) {};
\node[dot,label={[yshift=1pt,xshift=3pt]above:{$t'_0$}}] (tp) at (1.85,0) {};
\foreach \x in {2.3,2.75,3.2, 4.95,5.4,5.85,6.25,6.6, 8.3,8.7,9.1,9.5,9.9, 11.6,12.0,12.4}
  \node[dot] at (\x,0) {};
\node[lim,label={[yshift=1pt]above:{$t''_0$}}] (l0) at (3.95,0) {};
\node[bnd,label={[yshift=-1pt]below:{$t'_1$}}] (b1) at (4.55,0) {};
\node[lim,label={[yshift=3pt,xshift=-4pt]above:{$t''_1$}}] (l1) at (7.25,0) {};
\node[bnd,label={[yshift=-1pt]below:{$t'_2$}}] (b2) at (7.85,0) {};
\node[lim,label={[yshift=1pt]above:{$t''_2$}}] (l2) at (10.55,0) {};
\node[bnd,label={[yshift=-1pt]below:{$t'_3$}}] (b3) at (11.15,0) {};
\node[gray] at (2.75,-0.42) {\scriptsize stage $0$};
\node[gray] at (5.75,-0.42) {\scriptsize stage $1$};
\node[gray] at (9.05,-0.42) {\scriptsize stage $2$};
% ways into the cone above t'_2: the edge t''_1 t'_2, and 0-records in M
\draw[hasseE] (l1) -- (b2);
\draw[edgeE] (t0) .. controls (4.0,2.5) and (8.8,2.5) .. (l2);
\draw[edgeE] (M2) .. controls (3.4,3.25) and (9.0,3.1) .. (l2);
% invisible boundary: bit 1, so no edge from later limits
\draw[noE] (l0) .. controls (6.4,1.6) and (9.0,1.6) .. (l2);
\node[gray!80!black] at (7.85,1.22) {$\times$};
% t'_0 is a 0-record of every t''_n
\draw[edgeE] (t0) .. controls (2.4,1.0) and (3.4,1.0) .. (l0);
\draw[edgeE] (t0) .. controls (3.2,1.7) and (6.2,1.7) .. (l1);
% exits
\draw[exitE] (l0) .. controls (3.2,-1.0) and (2.0,-1.0) .. (t0);
\draw[exitE] (l1) .. controls (5.5,-2.0) and (1.6,-2.0) .. (M2);
\draw[exitE] (l2) .. controls (8.0,-3.0) and (1.4,-3.0) .. (M1);
\node[orange!80!black] at (5.6,-1.85) {\scriptsize exits $h(t''_n)\le t_0$};
\end{tikzpicture}
\caption{Bottlenecks in~$\G$: the up-closure of $t'_{n+1}$ can be entered only from~$t''_n$ or from the finite set $M^\star$ (squares), and $t_0$ dominates~$\eps$.}
\label{fig_bits}
\end{figure}

\begin{lemma}[Bottlenecks in $\G$]\label{BottleneckFlat}
For every $n\ge0$, we have $N(\upcl{t'_{n+1}})\subseteq\{t''_n\}\cup M^\star$.
\end{lemma}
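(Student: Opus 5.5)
By \cref{TGraph} applied to $q:=t'_{n+1}\in\Sigma(\Tb)$, every vertex of $N(\upcl{t'_{n+1}})$ other than $(t'_{n+1})^\star=t''_n$ is the immediate predecessor $x^\star$ of a $0$-record $x$ of~$t'_{n+1}$. So I only have to show that $x^\star\in M^\star$, or more simply that $x\in M$, for every such $0$-record~$x$.

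I split into two cases according to where $x$ sits. If $x<t'_{n+1}$, then $x$ is a $0$-record of $\mu:=t'_{n+1}$ with $x<t'_{n+1}\le\mu$. Hence \cref{RecordKilling} gives $x\in M$, and so $x^\star\in M^\star$. The remaining case is $x=t'_{n+1}$ itself. But $t'_{n+1}=t''_n\conc(a_{n+1},1)$ has bit~$1$, so it is not a $0$-record of itself, and this case cannot occur. (Even if it could, its predecessor would be $t''_n$, which is already allowed.)

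It remains to check that the bound from \cref{TGraph} applies to all of $N(\upcl{t'_{n+1}})$. An edge from outside into $\upcl{t'_{n+1}}$ is either of type~\ref{E1}, in which case its lower end is $t''_n$, or of type~\ref{E2} from some $u\ge t'_{n+1}$ to $y^\star$ with $y$ a $0$-record of~$u$. In the second case $y\le t'_{n+1}$, and $y$ is a $0$-record of $t'_{n+1}$ by~$(\star)$. This is exactly the content of \cref{TGraph}. One could instead apply \cref{RecordKilling} directly with $\mu:=u$, since $y<t'_{n+1}\le u$ whenever $y^\star<t''_n$, which avoids the passage through~$(\star)$.

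I expect no real obstacle. The only delicate point is the boundary node itself, and its bit~$1$ handles it.
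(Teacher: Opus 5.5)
Your proof is correct and is the paper's argument: apply \cref{TGraph} at $q=t'_{n+1}$, use the bit~$1$ of $t'_{n+1}$ to exclude it as its own $0$-record, and put every smaller $0$-record into $M$ via \cref{RecordKilling} with $\mu=t'_{n+1}$. Your third paragraph only re-derives \cref{TGraph} and can be dropped.
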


\begin{proof}
By \cref{TGraph}, a vertex $x\ne t''_n$ of $N(\upcl{t'_{n+1}})$ is the predecessor of a $0$-record $y$ of~$t'_{n+1}$.
As $t'_{n+1}$ has bit~$1$, we have $y<t'_{n+1}$, so $y\in M$ by \cref{RecordKilling}, and $x\in M^\star$.
\end{proof}

So the only ways into the up-closure of $t'_{n+1}$ lead through $t''_n$ or through the finitely many vertices in~$M^\star$, which lie at or below~$t_0$.
In~$\BG$, each boundary $t'_{k+1}$ with $k<n$ would be a record of the later limits, and its predecessor $t''_k$ would offer a further way in; see \cref{fig_bits}.

\begin{lemma}\label{NoTopFlat}
The high-ray $\hat\varrho$ has no top, and $\bigcap_{n\ge1}\upcl{t'_n}=\emptyset$.
\end{lemma}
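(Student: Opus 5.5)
The only candidate top of $\hat\varrho$ in $S^*$ is $\hat u:=\bigcup_{n\ge1}t'_n$, the union of the high-ray. So I will show that $\hat u\notin\Tb$ by showing that $\hat u$ is a closed limit, i.e. that it has only finitely many $0$-records.

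Since $\Tb$ is down-closed in $S^*$ and every top of $\hat\varrho$ in $\Tb$ would have to be $\hat u$, this shows that $\hat\varrho$ has no top in $\Tb$. For the second assertion, suppose some $v\in\Tb$ lies in every $\upcl{t'_n}$. Then $v$ lies above all of $\hat\varrho$, since the $t'_n$ are cofinal in $\hat\varrho$. It cannot lie in $\hat\varrho$, because $\hat\varrho$ has no maximum. So the least element of $\downcl v\setminus\hat\varrho$ is a top of $\hat\varrho$, exactly as in the argument in the proof of \cref{GammaThm}(iii). This contradicts the first part, so the intersection is empty.

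To bound the $0$-records of $\hat u$, let $x$ be a $0$-record of $\hat u$. Then $x<\hat u$, so $x<t'_{n+1}$ for some $n$, and also $t'_{n+1}\le\hat u$. Now I apply \cref{RecordKilling} with $\mu:=\hat u$. There is one subtlety: that lemma is stated for $\mu\in\Tb$, but its proof only uses the stage and boundary structure below $\mu$ together with $(\star)$, which holds in all of $S^*$. So it applies verbatim to $\hat u\in S^*$ and gives $x\in M$.

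Hence every $0$-record of $\hat u$ lies in the finite set $M$, whose size is at most $\last(t'_0)$ by \cref{Records}(ii). Therefore $\hat u$ has only finitely many $0$-records, so it is closed and $\hat u\notin\Tb$.

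The only point needing care is checking that the proof of \cref{RecordKilling} never uses $\mu\in\Tb$. Its case analysis uses only that $x\in\Sigma(\Tb)$ lies below $t'_{n+1}$, the bookkeeping bounds on the stages, the fact that the boundaries have bit $1$, and $(\star)$; none of these depends on $\mu$ being in $\Tb$.
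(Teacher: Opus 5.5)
Your proof is correct and takes the same route as the paper. The paper also shows that $\hat u=\bigcup_n t'_n$ is closed by rerunning the proof of \cref{RecordKilling}: records of $\hat u$ above $t'_0$ are boundaries with bit~$1$. It then gets the empty intersection from the least element of $\downcl{v}\setminus\hat\varrho$ being a top. The only cosmetic difference is that you bound the $0$-records of $\hat u$ by the finite set $M$, while the paper uses \cref{Records}(iii); your observation that the proof of \cref{RecordKilling} never uses $\mu\in\Tb$ is exactly what the paper's ``as in the proof of'' relies on.
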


\begin{proof}
A top of $\hat\varrho$ would be a limit of $\Tb$, and so it would be the sequence $\hat u:=\bigcup_n t'_n$.
As in the proof of \cref{RecordKilling}, every record of $\hat u$ above $t'_0$ is a boundary, and so it has bit~$1$; and only finitely many records of $\hat u$ lie at or below $t'_0$, by \cref{Records}(iii).
So $\hat u$ is closed and does not lie in~$\Tb$.
If a vertex $v$ lay in every $\upcl{t'_n}$, then all of $\hat\varrho$ would lie below $v$ while $v\notin\hat\varrho$, so the least element of $\downcl{v}\setminus\hat\varrho$ would be a top of~$\hat\varrho$.
\end{proof}

\begin{lemma}\label{MissedFlat}
No rooted ray of $S$ belongs to~$\eps$.
\end{lemma}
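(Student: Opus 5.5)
We follow the proof of \cref{Missed}, with \cref{BottleneckFlat} in place of \cref{Bottleneck}. Suppose a rooted ray $R$ of $S$ belongs to~$\eps$. By~\eqref{TailCone}, since $t'_1\in\hat\varrho=\varrho(\eps)$, the ray $R$ has a tail $R_1$ in~$\upcl{t'_1}$. Every vertex of $M^\star$ lies at or below $t_0<t'_1$, so $R_1$ avoids $M^\star$.

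First we show that $R_1$ contains infinitely many of the limits~$t''_m$. Let $R'$ be any tail of~$R_1$. By \cref{Ends}(iii), $R'$ has a vertex $x\in\hat\varrho$, and then $x<t'_{n+1}$ for some~$n$, so $x\notin\upcl{t'_{n+1}}$. By~\eqref{TailCone}, $xR'$ has a tail in $\upcl{t'_{n+1}}$. Hence $xR'$ contains an edge $uy$ with $u\notin\upcl{t'_{n+1}}$ and $y\in\upcl{t'_{n+1}}$. By \cref{BottleneckFlat}, $u\in\{t''_n\}\cup M^\star$, and since $u\in R_1$ avoids $M^\star$, we get $u=t''_n$. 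So every tail of $R_1$ meets some $t''_m$. (\cref{NoTopFlat} is not needed here, since \cref{Ends}(iii) already supplies vertices of $\hat\varrho$; it will matter for the topological part.)

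Next we take two of these limits in the right order. The indices of the limits $t''_m$ met by $R_1$, listed in the order in which $R_1$ meets them, cannot decrease forever. So $R_1$ meets some $t''_k$ before some $t''_{k'}$ with $k<k'$. The path $t''_{k'}Sr$ is the initial segment of $R$ up to $t''_{k'}$, traversed backwards, and it contains $t''_k\notin\upcl{t''_{k'}}$. Hence the exit $h(t''_{k'})$ occurs no later than~$t''_k$ on this path. It differs from $t''_k$, because $h(t''_{k'})\le t_0<t''_k$. Therefore $R$ meets $h(t''_{k'})$ after $t''_k$, that is, on~$R_1\subseteq\upcl{t'_1}$. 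This contradicts $h(t''_{k'})\le t_0<t'_1$.

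The one point needing care is the bottleneck step. In~$\BG$ every earlier limit $t''_m$ with $m\le n$ could be an entry vertex; here only $t''_n$ can, and the vertices of $M^\star$ must be excluded. They are excluded because they lie at or below~$t_0$, while the tail $R_1$ stays inside~$\upcl{t'_1}$.
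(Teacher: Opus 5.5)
Your proof is correct and is the paper's own argument. You rerun the proof of \cref{Missed} with \cref{BottleneckFlat} in place of \cref{Bottleneck}, and you exclude $M^\star$ because $M^\star\subseteq\downcl{t_0}$ is disjoint from $\upcl{t'_1}$, which is exactly what the paper's one-line proof does; your remark that \cref{NoTopFlat} is not needed here is also correct.
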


\begin{proof}
The proof of \cref{Missed} applies, with \cref{BottleneckFlat} in place of \cref{Bottleneck}: the vertex $u$ lies in $\{t''_n\}\cup M^\star$, and $M^\star$ is disjoint from $\upcl{t'_1}$, as $M^\star\subseteq\downcl{t_0}$.
\end{proof}

\begin{lemma}\label{MissedProfile}
The end $\eps$ has degree~$1$ and is dominated by at least one, but at most $|M|$, vertices.
\end{lemma}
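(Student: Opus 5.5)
Three things need proving: the degree of $\eps$ is at least $1$ and at most $1$, and its dominators are between $1$ and $|M|$ in number. All of it follows from the bottleneck structure of \cref{BottleneckFlat}.

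\emph{Degree $1$.} Every end has degree at least $1$. Suppose $R_1,R_2$ are disjoint rays in~$\eps$. By~\eqref{TailCone}, each has a tail in $\upcl{t'_1}$. Choose $n$ so large that these tails avoid the finite set $M^\star\subseteq\downcl{t_0}$; they do so automatically, since $M^\star\cap\upcl{t'_1}=\emptyset$.

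By \cref{Ends}(iii), each tail meets $\hat\varrho$ cofinally, so each has a vertex below $t'_{n+1}$ for every~$n$. By~\eqref{TailCone}, each also has a tail in $\upcl{t'_{n+1}}$. So each tail must contain an edge entering $\upcl{t'_{n+1}}$, and by \cref{BottleneckFlat} its lower endpoint is $t''_n$ (as $M^\star$ is avoided). Hence both rays contain $t''_n$, contradicting disjointness. So the degree is exactly~$1$.

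\emph{Dominators: at most $|M|$.} By \cref{Dominators}, a dominating vertex $v$ lies in $\hat\varrho$ or is a top of it. By \cref{NoTopFlat} there is no top, so $v\in\hat\varrho$. By the second part of \cref{Dominators}, $v\in N(\upcl{t'_{n+1}})$ for every $n$ with $t'_{n+1}>v$. Taking $n$ large, \cref{BottleneckFlat} gives $v\in\{t''_n\}\cup M^\star$. Since $v$ is fixed and lies below $t'_{n+1}$, while the $t''_n$ are distinct, $v=t''_n$ can hold for at most one~$n$. Using two values of $n$ then forces $v\in M^\star$. Thus there are at most $|M^\star|\le|M|$ dominators.

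\emph{Dominators: at least one.} The natural candidate is $t_0=(t'_0)^\star\in M^\star$. For each $n$, the sequence $t'_0$ is a $0$-record of the limit $t''_n$: it has bit $0$, and every value of $t''_n$ after $t'_0$ exceeds $\last(t'_0)=a_0$ by the analogue of \cref{Bookkeeping}. So by~\ref{E2}, $t_0$ is adjacent to every $t''_n$. The limits $t''_n$ are cofinal in $\hat\varrho$, and by \ref{K1} consecutive ones are joined by paths in the intervals $[t''_n,t''_{n+1}]$. These paths form a ray $R$ with vertex set in $\hat\varrho$, cofinal in it, so $R\in\eps$ by \cref{Ends}(iii). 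Since $t_0<t'_0$, the ray $R$ avoids~$t_0$. The single edges $t_0t''_n$ are infinitely many $t_0$--$R$ paths, disjoint except at~$t_0$. So $t_0$ dominates~$\eps$.

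The main point to verify carefully is the degree argument. One must check that both tails really need an entering edge into each $\upcl{t'_{n+1}}$: this holds because each meets $\hat\varrho$ below $t'_{n+1}$, and those vertices lie outside $\upcl{t'_{n+1}}$. One must also check that the entering edge's outer endpoint is excluded from $M^\star$, which uses $M^\star\subseteq\downcl{t_0}$ and the tails lying in $\upcl{t'_1}$. The record computation for $t'_0$ is routine from the bookkeeping.
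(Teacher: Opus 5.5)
Your proof is correct and is essentially the paper's proof. You use the same witness $t_0$, via the $0$-record $t'_0$ of every $t''_n$, and the same combination of \cref{Dominators}, \cref{NoTopFlat} and \cref{BottleneckFlat} for the dominators. For the degree you use the same bottleneck at $t''_n$; the only difference is that you find a vertex outside $\upcl{t'_{n+1}}$ via \cref{Ends}(iii), as in \cref{Missed}, rather than via \cref{NoTopFlat}.

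One small correction is needed. Such a vertex exists only for all sufficiently large $n$, not for every $n$: it fails for $n=0$, since your tails lie in $\upcl{t'_1}$. A single $n$ large enough for both tails is all your argument needs.
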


\begin{proof}
\emph{The end $\eps$ is dominated by $t_0$.}
Every node of $\hat\varrho$ above $t'_0$, in particular every $t''_n$, has $t'_0$ as a $0$-record: $t'_0$ has bit $0$, and all values after $t'_0$ exceed $\last(t'_0)$ by the analogue of \cref{Bookkeeping}.
So every $t''_n$ is adjacent to $(t'_0)^\star=t_0$.
By~\ref{K1}, every interval $[t'_{n+1},t''_{n+1}]$ contains a $t'_{n+1}$--$t''_{n+1}$ path, and together with the edges $t''_nt'_{n+1}$ these paths form a ray starting at $t''_0$ whose vertex set is contained and cofinal in~$\hat\varrho$.
This ray belongs to $\eps$ by \cref{Ends}(iii), and $t_0$ is adjacent to all the $t''_n$ on it.

\emph{Every vertex dominating $\eps$ lies in~$M^\star$.}
By \cref{NoTopFlat,Dominators}, such a vertex lies in $N(\upcl{t'_{n+1}})$ for all large~$n$, and hence in $\{t''_n\}\cup M^\star$ by \cref{BottleneckFlat}; as the $t''_n$ are distinct, it lies in~$M^\star$.

\emph{The end $\eps$ has degree~$1$.}
Let $R_1,\dots,R_k$ be disjoint rays of~$\eps$; as $M^\star$ is finite, we may assume that they avoid~$M^\star$, by passing to tails.
By \cref{NoTopFlat}, no vertex lies in all up-closures $\upcl{t'_n}$; so there is an $n$ such that none of the first vertices of $R_1,\dots,R_k$ lies in $\upcl{t'_{n+1}}$.
Each $R_i$ has a tail in $\upcl{t'_{n+1}}$ by~\eqref{TailCone}, so it meets $N(\upcl{t'_{n+1}})\subseteq\{t''_n\}\cup M^\star$, and hence contains~$t''_n$.
As the $R_i$ are disjoint, $k\le1$.
\end{proof}

\begin{proof}[Proof of \cref{MainThm}]
The graph $\G$ is connected, as it is a $T$-graph by \cref{TGraph}, and~(i) was shown after \cref{TGraph}.
For~(ii), let $S$ be a spanning tree of~$\G$.
The end $\eps$ constructed above is missed by $S$, by \cref{MissedFlat}, as $S$ contains a ray of $\eps$ only if it contains a rooted ray of~$\eps$; and it has degree~$1$ and is dominated by at least one but only finitely many vertices, by \cref{MissedProfile}.
For~(iii), the proof of \cref{GammaThm}(iii) in \cref{sec_tst} applies to~$\G$, a $T$-graph of finite adhesion by \cref{TGraph}, with the changes above.
Again we use \cref{BottleneckFlat} and the fact that $M^\star$ is disjoint from $\upcl{t'_1}$; and the vertex $A(\tau)$ cannot lie in every up-closure $\upcl{t'_n}$, by \cref{NoTopFlat}.
\end{proof}

\section{An open problem}\label{sec_open}

Our arguments show that every spanning tree of $\BG$ or of $\G$ misses an end; they never force an end to contain two rooted rays of a spanning tree.
This suggests a weaker notion: a rooted spanning tree $S$ of $G$ is \emph{end-injective} if every end of $G$ contains at most one rooted ray of~$S$.
The notion is due to Halin, who called such subgraphs \emph{endentreu}~\cite[p.~128]{Halin64}, and it was studied in detail by Polat, who called them \emph{terminalement fid\`eles}~\cite[p.~286]{Polat82} and later \emph{end-respecting}~\cite[\S2.2]{Polat96}.

\begin{problem}\label{ProbEndInjective}
Does every connected graph in which every end has countable degree or is dominated by only countably many vertices have an end-injective spanning tree?
\end{problem}

Some restriction on the ends is needed: in a one-ended graph, an end-injective spanning tree is rayless or end-faithful, and the graphs of Seymour and Thomas~\cite{SeymourThomas} and of Thomassen~\cite{Thomassen} have neither.

Polat answered \cref{ProbEndInjective} positively for one-ended graphs~\cite[Theorem~2.11]{Polat90}, and more generally for terminally accessible graphs~\cite[Corollary~2.3]{Polat90}, which include all graphs without uncountably many disjoint rays~\cite[Corollary~2.1]{Polat90}; in fact, he showed that these graphs even have end-faithful spanning trees, see also~\cite[p.~320]{Diestel92}.

By \cref{GammaThm,MainThm}, our graphs $\BG$ and $\G$ are therefore terminally inaccessible.
Nevertheless, both have end-injective regressive spanning trees, as can be seen by joining every successor to its immediate predecessor, and every limit to the immediate predecessor of its least record in $\BG$, or of its least $0$-record in~$\G$.

\end{document}